\newcommand{\Real}{\mathbb R}
\newcommand{\norm}[1]{\|#1\|}
\newcommand{\abs}[1]{\left\vert#1\right\vert}
\newcommand{\set}[1]{\left\{#1\right\}}
\newcommand{\la}{\lambda}
\newcommand{\N}{\mathcal{N}}
\newcommand{\be}{\begin{equation}}
\newcommand{\ee}{\end{equation}}
\newcommand{\bea}{\begin{eqnarray}}
\newcommand{\eea}{\end{eqnarray}}
\newcommand{\J}{\mathcal{J}}
\newtheorem{thm}{Theorem}
\newtheorem{remark}{Remark}
\newtheorem{prop}{Proposition}
\begin{document}

\title{A model of riots dynamics: shocks, diffusion and thresholds}

\author[a]{H. Berestycki}
\author[a,b]{J.-P. Nadal}
\author[c]{N. Rodr\'iguez\footnote{corresponding author: nrodriguez@unc.edu}}
\affil[a]{\footnotesize{Ecole des Hautes Etudes en Sciences Sociales and CNRS, Centre d'Analyse et de Math\'ematique Sociales (CAMS, UMR8557), 190 - 198 avenue de France, 75013 Paris, France.}}
\affil[b]{\footnotesize{Ecole Normale Supérieure, CNRS, UPMC and Univ. Paris Diderot, Laboratoire de Physique Statistique (LPS, UMR8550), 24 rue Lhomond, 75231 Paris cedex 05, France}}
\affil[c]{\footnotesize{UNC Chapel Hill, Department of Mathematics,
Phillips Hall, CB$\#$3250, Chapel Hill, NC 27599-3250, USA }}

\maketitle 

\begin{abstract}
 
We introduce and analyze several variants of a system of differential equations which model the dynamics of social outbursts, such as riots.
The systems involve the coupling of an explicit variable representing the intensity of rioting activity and an underlying (implicit) field of social tension.
Our models include the effects of
exogenous and endogenous factors as well as various propagation mechanisms.  From numerical and mathematical analysis
of these models we show that the assumptions made on how different locations influence one another and how the tension in
the system disperses play a major role on the qualitative behavior of bursts of social unrest.
Furthermore, we analyze here various properties of these systems, such as the existence of traveling wave solutions, and formulate
some new open mathematical problems which arise from our work.
\end{abstract}

%\tableofcontents

\section{Introduction}
\label{sec:intro}
\label{sec:intro}
This article proposes a framework for describing the internal dynamics of riots, focusing on self-reinforcement and spatial diffusion mechanisms. The purpose here is not to explore the economics, social or political origins of riots, even less to discuss the legitimacy of any given riot.
%This work takes advantage of is related to
The approach in our work bears partial similarities to
a recent literature on the use of mathematics in the analysis of uncivil and criminal activities \cite{Short2008,
Berestycki2010, Mohler2011}, showing for instance that statistical regularities in crime patterns, together with insights from criminology, allow one to anticipate the evolution of such patterns, at least on short time scales.
However, the phenomena and the models are quite different.
We now first review some of the most common traits in riots to motivate some of the key ideas in our  model.

Civil disobedience and riots have been and continue to be means for populations or segments of populations to express their discontent towards their government or to react to certain events or political decisions \cite{Moore1978}.
While these episodes of bursts of social activity are relatively rare compared to other phenomena, such as residential burglaries which essentially never cease,
their current significance remains.  If need be, the recent outbursts of protests and civil disobedience that broke
out in Ferguson, Missouri (US) after the fatal shooting of Michael Brown by a police officer on August 9, 2014 reminded us of the relevance of this
phenomenon \cite{Clarke2014}.  Brown, who was African-American, was short by Darren Wilson, a white officer, without apparent probable cause and this
sparked a national and international
conversation about the inadequate and tense relationship between the law enforcement and the community \cite{Lowery}.
A second shooting in St$.$ Louis city reignited the protests and increased the tension between the African American community and the police
force in Ferguson \cite{Salter2014}.

Whether the tension between the population and their ruling government or the police arises from political events or decisions ({\it e.g.} the riots following the assassination of Julius Cesar in 44 BC in Rome, or the New York draft riots in 1863), new or increased taxes (such as the Moscow salt riots in 1648) , food scarcity \cite{Walton1994},
high unemployment \cite{Flamm2005}, police brutality \cite{Lowery, Baudains2012}, or racial tension, these events of social unrest are normally believed to
have been triggered by a specific individual event.
However, one can think of this ``triggering event" as the the straw that broke the camel's back.

  Consider, for example, the beating of Rodney G. King on March 3, 1991
by a group of policemen in Los Angeles, California (US) that was caught on video-tape and was made available for the world to see.  Interestingly enough, it was not the
beating of King that sparked the
riots, but rather the injustice believed to be committed when the police officers involved were exonerated. Indeed, on April 29 of the next year the four police officers involved
in this incident were acquitted and the first incident of the 1992 Los Angeles riots was reported only two hours later \cite{Delk1995,Mucchielli2009}.

Of particular interest to us, is the case of the 2005 riots in France. The triggering event was the incident involving three young men who jumped into a power substation while being pursued by the police
in Clichy-sous-Bois, one of the poorest suburbs of
Paris, on October 27, 2005.
Two of these young men died and this was the spark for the riots that spread throughout the country and lasted over three weeks \cite{Snow2007}.
As a final example, we mention the case of Mark Duggan who was shot on the chest by the police in Northern London on August 4, 2011, this triggered a four-day
riot that spread throughout London \cite{Baudains2012}.

These are only a few examples, but similar episodes continue to be observed throughout the world.  Needless to say that the concept of civil disobedience is not new and has lead to many revolutions
\cite{Arendt1972}.  As Henry David Thoreau put it in his essay {\it Civil Disobedience} the idea behind civil disobedience
is the belief that a just person must stand for what is right, which is not necessarily what is lawful...
``{\it If a thousand men were not to pay their tax bills this year, that would not be a violent
and bloody measure, as it would be to pay them, and enable the State to commit violence and shed innocent blood. This is, in fact, the definition of a peaceable revolution,
 if any such is possible.}"

While it is widely accepted that all episodes of civil disobedience can be traced back to a single event, it is unclear what events are
going to generate a cascade of civil unrest.  In fact, there are many incidents that are very similar in nature to the triggering
events mentioned above which do not generate bursts of rioting activity.
For example, many incidents of shootings by police officers do not generate riots.  If a community is content (employed,
fed, educated, etc.) such incidents might be viewed as unfortunate occurrences, but citizens might not feel compelled to take to the streets
and protest.  However, if the social tension is sufficiently high, the shooting that might soon be forgotten in a content community,
will have a high probability of igniting riots.   In most serious riots in France, the triggering event was the death of a young individual in a poor neighborhood in situations where
the police, or some other official authority, was involved.

From the above examples we see that for a riot outburst to occur the social system needs to be ``ripe," in the sense that the social tension in the system needs to be sufficiently high.
The social tension is a function of the economy, the police relationships with the community, the education
level, as well as individual events.  For example the visit of President Sarkozy to Clichy-sous-Bois on June 2005 where he stated that ``we will clean up the city with a karcher,''
likely raised the tension in this poor community.
Also, in the recent years, solidarities based on religious issues in poor neighborhood have started to play a role in generating local riots in France (as in Trappes, a suburb of Paris, July 2013).

From our perspective, episodes of civil unrest require three factors: exogenous events (these include the triggering event, events
that increase the tension in the system, and even ``pre-triggering'' events), endogenous factors (self-reinforcement in the system), and a
sufficiently high social tension (a ``ripe" system that is ready to experience self-reinforcement).
Endogenous factors are those that are internal to a system, such as would be word-of-mouth type effects that
promote the propagation and organization of the civil unrest.  For example, during the French riots in 2005 the police announced on a
daily basis the number of rioting events that occurred and the whole country was aware of the spread and level of
the rioting activity.
% \textcolor{red}{\cite{}}.
 As a matter of fact, some believed that these announcements, if anything, were  fueling the continuation of the riots.  However, the government stopped announcing these numbers three days prior to the total cease of rioting activity and at this point the level of
rioting was already quite small (less than a total of one hundred events per day).
%{\bf \textcolor{red}{\cite{}}}.
On the other hand, exogenous factors are external factors to the system that also affect the behavior of a system but in a different way.

This dichotomy has been observed in various systems, such as world-wide-web searches and the number of times YouTube videos are viewed.   Recently, in \cite{Mohler2011} Mohler and collaborators introduced the idea of modeling certain criminal activity, which experience
repeat and near-repeat victimization \cite{Short2009}, as an epidemic-like phenomena using Hawke's processes.
Crane and Sornette in \cite{Crane2008} introduced
a method to determine the quality of highly viewed YouTube videos by using a Hawke's process-like model to extract the effects of the exogenous
factors versus the endogenous factors.  As noted in \cite{Crane2008} a great example of endogenous factors playing a significant role
is the number of views of the Harry Potter trailer; on the other hand, the ``tsunami" keyword search outburst was completely
generated by the exogenous factor, mainly the tsunami that shook Japan in 2011.  One of the advantages of the model introduced in
\cite{Crane2008} is that it affords the ability to extract the quality of videos.  For example, a video with a sharp increase
in the number of views and sudden decrease is more likely to have been boosted by an exogenous factor, but the quality of the video is
probably not sufficiently good for viewers to pass along to others in their social network.  On the other hand, the number of views of a video
that experiences a slower but steadier increase is more likely to have been fueled by the quality of the video.
An analogy can be drawn to civil unrest: episodes with a sudden spike and rapid self-relaxation are likely the mark of
a strong exogenous factor and episodes which experience a slower but steady increase are likely to be fueled by the endogenous factors.
The latter occurs when the system is experiencing a high and slowly decaying tension, and one can argue that these are more serious episodes or,
at least, it is natural to expect that they will last longer.

When dealing with systems that are inherently spatial a fourth factor is the influence that one location has on another.
This effect has become extremely
important due to the globalization of information brought about by the spread of technology
and social media, a particularly suggestive example of the importance of this effect was observed in the ``Arab Spring", a revolutionary wave of riots, demonstrations,
and protests that began in 2010 and spread throughout many countries including Tunisia (where the triggering event occurred), Egypt, Syria, and Libya - see \cite{Lynch12}
for a historical account and \cite{Lang2014} for a mathematical model related to revolutions.
Therefore, it is of much interest to understand how the rioting activities spread spatially.  What
leads some riots to spread while others remain localized?  In France, for example, there has been riots before
and after 2005, but these riots remained at a local level - as in Vaulx-en-Velin, suburb of Lyon, October 1992, the {\em Sapins} neighborhood in Rouen in January 1994, the {\em La Duch\`ere} neighborhood in Lyon, October 1995, etc. In October 1995, in Vaulx-en-Velin again, a rather severe riot of about two hundred young people extended to the rest of the suburb of Lyon, but did  not lead to riots on a  national scale. Similarly, in 2007 at Villiers-le-Bel, a two day
 riot propagated only to neighboring cities. A sociologist called these riots ``les émeutes de la mort'' \cite{Peralva}.

Early models of riots formation have focused on the emergence of a collective phenomena due to herding behaviors \cite{Schelling1973,Granovetter1978}.
Such models, formally related to Ising models in physics, have led to an important literature with various applications in social
and economic sciences - for recent works, see \cite{Gordon2009,Bouchaud2012} and references therein. In the recent years, the generic dynamics of riots and other social phenomena,
as evoked above, has attracted more attention. In particular, several works have developed models based on the assumption that these systems are driven by Hawke's processes \cite{Hawekes1971} -- see for
example \cite{Ogata1998a,Crane2008,Li2014} and references within.

In this work we propose a model for the bursts of civil disobedience that includes the four mechanisms discussed above: endogenous factors, exogenous factors, sufficiently
high social tension and influences,
both local and non-local.
In the bigger scope one of the ultimate objectives is to extract the strength of the exogenous factors versus the endogenous factors.

As a first step, in
this work we introduce a stochastic system on a network, which seems fitting for this application.
Numerical realizations of the model illustrate a rich set of behavior of the system.
We observe, especially in some parameter regimes, that the model behaves in a qualitatively similar way to what is observed in many
real-world riots - capturing the global behavior without capturing the details.
  Moreover, we explore the effects that the non-local spread of information has on the spread of civil unrest.
Through the development of the model we observe that it is necessary that the social-tension in the system also spreads in order for the system to
experience a large scale burst of activity.  To explore this phenomena mathematically, we then derive a system of nonlinear partial differential equations.

This work presents a general model about riots and was initially motivated by the 2005 French riots. However, we believe that the family of models introduced here  can be applied or adapted to many systems that experience bursts of activity
followed by a period of relaxation.
The general spirit of the model is to combine an explicit function which is observable with an implicit field, here the social tension. The explicit function - here the level of rioting activity, however it is defined - corresponds to actions that can be measured.
 The latter implicit field can be thought of as a potential field.  The plausible existence of an underlying social field in collective social phenomena is stressed in \cite{Bouchaud2013}.
We believe that this approach
is relevant for a number of other situations where the introduction of such an underlying implicit field is warranted.
For example, conflicts of various nature, with their escalation parts present some of the aspects we have described here.  At the same time, models in the same vein but with bistable non-linearity in the
implicit field are relevant to describe the loss of confidence, be it among people, towards organizations, media or towards the state of the economy, is such an instance where a buildup of distrust can enable a seemingly minor action or event to precipitate a complete loss of confidence.

 We also note that the system we introduce here has some similarities with models in neuroscience. Specifically it is related to models of neural dynamics that take into account synaptic depression \cite{Tsodyks1998}. We give more details about this link in the next section (see discussion following equation (\ref{sys:homo})). It would be interesting to further explore these analogies.

 We wish to emphasize that our objective here is to introduce simple models whose solutions exhibit the `stylized facts' observed in riots,
which can vary significantly from riot to riot.  For example, the 2011 London riots was on the rise for four days and essentially ceased abruptly on day five,
whereas the 2005 French riots took a period of about twenty-five days with long periods of increased activity and self-relaxation.
There have been models for riots introduced in the literature previously -- see for example \cite{Braha2012, Lang2014, Davies2013}, which we discuss in more detail below.
However, to the authors' knowledge this is the first PDE model developed for riots, although the use of PDE systems to model urban crime has recently become an active field of research
\cite{Short2008, Berestycki2010}.  A continuous model affords us the ability
to prove rigorous spreading and decay estimates using PDE methods.
It would be worthwhile, in future work, to move from a model describing the global behavior
to one that actually captures more details about the actual spread of a given rioting activity.

Lastly, we discuss some previous works that are especially related to our research.  Theories on the role of contagion or diffusion processes in collective actions have been proposed already a long time ago (see e. g. LeBon 1895 \cite{LeBon1895}, and references in \cite{Myers2000}), but formal mathematical models are quite recent.
In \cite{Braha2012}, Braha introduced a non-linear spatial dynamical model for
the global spread of civil unrest that includes short-range connections (describing geographic locations) and long-range connections
(describing the effects of social networks and the media).  In this work the author concludes that external causes such as those mentioned in the introduction (racial
tension, food scarcity, etc.) are not necessary for the sudden outburst of civil disobedience.  This is in contrast to our observation that the dynamics of these
external causes are essential to fully understand any rioting activity.
Davies and
collaborators  studied a model in \cite{Davies2013}, similar in nature to that of Braha, but which included the effects of police deterrence.
This model was particularly concerned with data from the 2011 riots that took place in London.  The authors tune the parameters of the model and obtain
a simulated bursts of rioting activity that is qualitatively similar to what occurred during the 2011 riots.
Finally, we mention a compartmentalized model for the dynamics of revolutions \cite{Lang2014}.  The authors of that work
use a simple differential equation model that includes the effects of police repression and censorship.  Under the assumption
that for a revolution to propagate there needs to be a sufficient amount of protesting taking place, the authors divide the parameters
of the model into regions that would lead to either stable or unstable regimes.

While the models we introduce here contain similar ideas to those used in the works mentioned above, our model has a wider scope.  It contributes the
dynamics of the social tension in the system and a continuous model that allows us to rigorously analyze certain key characteristics
 of the spread of riots.  The use of continuous models, which are less conventional in these contexts,
to describe social phenomena has been popularized in recent years.
Of particular interest are the models of Short and collaborators in \cite{Short2008} and of Berestycki and Nadal in \cite{Berestycki2010}
to describe the propagation of crime.  These works introduce a notion of an invisible scalar field that measures the probability that a criminal activity occurs, this field is refereed to as the
``attractiveness field" in \cite{Short2008} and the ``willingness to commit a crime" in \cite{Berestycki2010}.
In the context of riots or civil unrest, this is analogous to a measure of the social tension.  This concept is also
found in the model of \cite{Braha2012}, which includes a measure of the political, social, and economic stress.  %\\

{\it Outline of the paper.}
In section \ref{sec:discrete-model} we introduce the model on a network and illustrate the results of some numerical
realizations of the system on a single site in section \ref{sec:num-exp}.  We analyze the dynamics in the simplest case of a single site in section \ref{sec:discrete-analysis}.
In the following section we perform and illustrate some numerical experiments on a network.  We derive the continuous system in the case of local and non-local spread of the social tension in section \ref{sec:continuous}.
We discuss the propagation of rioting activity in section \ref{sec:waves}. We conclude with a discussion in section \ref{sec:disc}.

\section{Description of the model}
\label{sec:discrete-model}
It is natural to consider a network of $N$ nodes, where each node represents a location that is prone to rioting activity.  These nodes correspond to the
``urban clusters" of \cite{Braha2012} and they can represent, for example, cities in a country or neighborhoods within a city
that are likely locations for the gatherings of people who are protesting or participating in more violent and destructive activities, such as
arson or looting.  Let us denote this network by $\mathcal{N},$ we discuss the connections between the nodes in a
network shortly.  We assume that for any node $s\in \mathcal{N}$ there is an explicit %%%% invisible
field that measures the {\it level of rioting activity}.
The level of activity is dynamic in time and we denote it by $\lambda(s,t)$.
We further assume that there is a base intensity rate $\lambda_b(s)$ which can vary between nodes on the network.
This base level determines the low recurrent activity that occurs in the absence of any unusual factors.
%%%%
For example,
according to media sources, the typical number of burnt cars in France is between fifty to one-hundred per night, with as many
as three to four hundred burnt during special
times, such as New Year's eve.
% {\bf \textcolor{red}{\cite{}}}.
Interestingly, since around 1999 the burning of cars, in the absence of riots, has become a national sport in France.  In fact, it is
estimated that 10-30$\%$ are insurance crimes.  This is clearly a component of the self-reinforcing mechanism:  some people might burn their own cars
after other cars have been burnt in their neighborhood so as to make the insurance company believe that they are victims of these events.

Under the assumption that the number of rioting
activities follows an inhomogeneous Poisson process, the $\lambda(s,t)$ would correspond to the intensity of the process, which is the expected
number of rioting events.   In this case, the expected number of events that occur during the time interval $(a,b)$ at node $s$ is given by:
\[
\lambda(s,(a,b)) =\int_a^b\lambda(s,t)\;dt.
\]

Assuming that the system is ``ripe,'' then the occurrence of a triggering event will spark
a movement that gains momentum and is self-reinforcing.  The
endogenous effect (or {\it self-excitement}) is built in the dynamics of $\lambda(s,t).$  Of course, there is a natural saturation limit, e.g. there
is a maximal number of building and cars that can be destroyed.  Thus, we begin with the following dynamics of the level of rioting activity:
\begin{equation}\label {eq:1}
\frac{d}{dt}\lambda(s,t)= -\omega (\lambda(s,t)-\lambda_b(s)) + G(\lambda(s,t)),
\end{equation}
where $\omega$ is the natural mean reverting parameter of the level of rioting activity if there is no self-reinforcing activity.
%%%%
For simplicity
in  most of this paper we will assume $\lambda_b=0$. However, including a non zero base level of activity will be essential for future validation of the model with data.
%%%%
The self-reinforcement mechanism is modeled
by the function $G(z)$ which satisfies:
\begin{equation}\label{def:G}
G(z)> 0\;\text{for}\;z\in(0, z_0), \;G(0)= 0\;\text{and}\; G(z)\le 0\;\text{for} \;z\geq z_0.
\end{equation}
 An example of this is a KPP-type term: $G(z) = z (z_0 - z)$ for $z\in (0, z_0)$ for some $z_0>0$.

Since riots are bursts of social activity triggered by exogenous events, but not all external events (similar in nature) lead to riots,
we assume that the systems must be ``sufficiently ripe."  To express this mathematically, we introduce an implicit variable that represents
the readiness of the system to experience these bursts.  We refer to this scalar field as the {\it social tension} and denote it by $\alpha(s,t).$
Naturally, this value can vary from cluster to cluster and it is dynamic in time.
Moreover, we assume that external events, such as, controversial political remarks, lack of social justice, high
unemployment rates, police brutality, and the triggering events, tend to increase the tension in the system.  In a way the social tension measures the
level of resentment that a community or population feels toward the authority they are facing.  Thus,
we think of the kindling of riots as being
analogous to flame propagation: the endogenous factors take substantial effect when the tension has reached a {\it critical tension}.
This leads us to update \eqref{eq:1} as follows:
\begin{equation}\label {eq:2}
\frac{d}{dt}\lambda(s,t)= -\omega (\lambda(s,t)-\lambda_b(s)) +r(\alpha(s,t))G(\lambda(s,t)),
\end{equation}
where $r(z)$ is, for example, a sigmoid function:
\[
r(z)= \frac{1}{1 + e^{-\beta(z-a)}},\]
where $\beta>0$ provides a measure of the transition slope between a {\it relaxed state} (non-excited state) and an {\it excited state}.  In other words,
it provides a measure of how fast the transition is between a system that does not include the endogenous factors and
a system with the full-force of these factors.  The critical tension is denoted by $a$.
Note, that in the limit as $\beta$
approaches infinity $r(\alpha)$ approaches a step function: then as soon as the tension is above the critical threshold
the endogenous factors are in full-force.  We refer to Figure \ref{fig:r} for an illustration of this transition function.
The idea of
a critical threshold was introduced in \cite{Lang2014}, where the authors make the assumption that
the number of protests has to be sufficiently large before it begins to grow into a revolution.  This assumptions leads to a bistable
ordinary differential equation.  However, we note that the critical value is assumed directly in the level of rioting activity, which stands in contrast with
our model.

It is clear that the dynamics of $\alpha(s,t)$ in the process are crucial. Based on the above discussion we first assume that the
exogenous factors increase the social tension in the system.  It is noteworthy to mention that there are numerous factors in a society,
which are present at all times and do not
correspond to a single event, but that also significantly affect the level of tension that a community experiences.  These factors include,
but are not limited to, the state of the economy, unemployment rates, and political tensions.  We include the exogenous factors that can be pin-pointed to
a particular time and place as ``point sources" in our model.  These occurrences are deterministic
and should be clear indicators that the social tension will increase.
In general, if $n$
exogenous events occur at the times and the locations $\left\{s_i,t_i\right\}_i^n\subset \mathcal{N}\times \left\{t>0\right\},$ the source term produced by
this effect is given by $\sum_{i=1}^nA_i \delta_{t=t_i,s=s_i}$, where $A_i$ measures the intensity of the exogenous events and $\delta_{t=t_i,s=s_i}$
is the Dirac delta centered at $(t_i,s_i)$.
To include the more constant factors, such as the state of the economy, we introduce a source term $\alpha_b(s).$  For example,
if the economy is on a downward turn, the social tension will increase: $\alpha_b(s)>0$ during this period.  The second important hypothesis is that, 
in absence of exogenous inputs, the social tension tends to decay (hence $r(\alpha)$ will decay). This hypothesis is somewhat analogous to Myers' proposal that the riot ``infectiousness'' decays gradually over time 
\cite{Myers2000}, which he tests with an econometric approach in the case of the US racial riots in the 60s.
Incorporating these effects into the model gives:
\[
\frac{d}{dt}\alpha(s,t) =  \sum_{i=1}^nA_i\delta_{t=t_i,s=s_i} -   h(\lambda) \alpha(s,t)+\theta \alpha_b(s).
\]

The function $h(\lambda)$ represents the effect that riots or protests can have on the tension: the higher the level of rioting activity the
slower the tension decays.  For example, $h(\lambda)$ could have the form
\begin{equation}\label{def:h}
h(\lambda) = \theta \exp (-p \;\lambda)\quad\text{or} \quad h(\lambda)= \theta \, \left(1+\frac{\lambda}{\lambda_1}\right)^{-p}  %%% \quad \text{for}\quad p>0.
\end{equation}
The parameter $\theta=h(0)$ measures the natural decrease of the tension per unit of time, which sets the natural timescale over
which the exogenous factors have an effect.
The equation is set in such a way that in the absence of shocks ($A_i=0$) and when there is no rioting
(i.e. $\lambda=0$), then, the social tension field $\alpha$ reverts to the base rate $\alpha_b$. Indeed, the equation then reduces to $\dot{\alpha}= - \theta ( \alpha - \alpha_b)$.

The parameters $p$ and $\lambda_1$ control the influence that $\lambda$ has on the decay of $\alpha$. In the following, we will set $\lambda_1=1$ and assume a slow decay, taking $h(\lambda)=\theta/(1+\lambda)^p  $ with $0 < p\leq 1$.  %%% MODIF
See Figure \ref{fig:h} for an illustration of this function with two different values of $p >0$. However, from the point of view of modeling, both the cases $p>0$ and $p<0$ make sense. They describe different situations. We will discuss the case of a fast decay of activity, that is $p<0$, in further work.

The timescale over which the
exogenous factors have an effect can be different from the timescale over which the endogenous factors have an effect.  For example, it took one year for the
Los Angeles riots to begin after the initial release of the video showing the beating of King.  However, the tension had been
building up and the exoneration of the policemen responsible for the beating increased the tension above the {\it critical tension}.  On the other hand, the riots only lasted six days.  However, these six days were very intense and the total number of deaths surpassed
that of any other riot in the United States with the exception of the New York city draft riots of 1863 \cite{NatGeo}.  This leads us to the condition
that $\omega>\theta,$ so that the exogenous effects can be observed over a longer period of time than the endogenous effects.

\begin{figure}[H]
  \center
  \subfloat[Transition function: $r(z)$]{\label{fig:r}\includegraphics[width=0.5\textwidth]{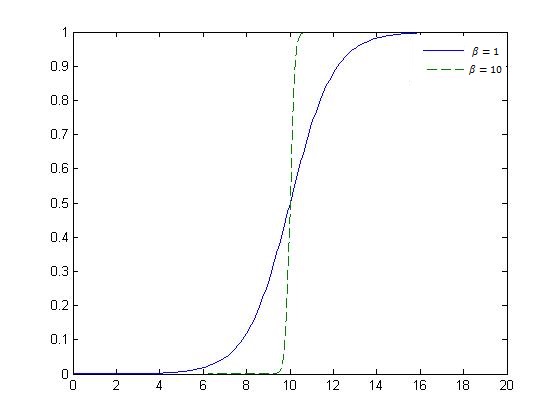}}
 \subfloat[Decay control function: $h(z)$]{\label{fig:h}\includegraphics[width=0.5\textwidth]{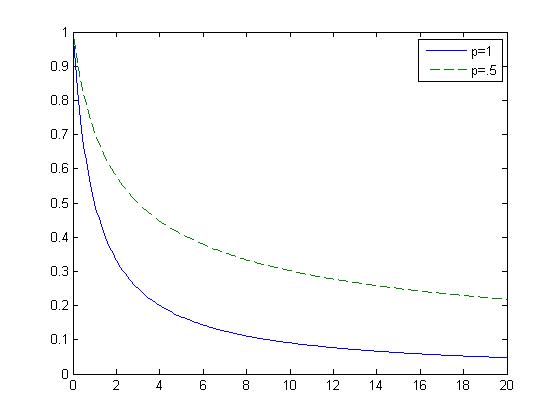}}
  \caption{(a) Illustration of the function $r(z)$ with $\beta=1$ and $\beta = 10.$ (b) Illustration of the function $h(z)$ with $\theta=1$ for $p=1$ and $p=0.5$.}\label{fig:func}
\end{figure}

Including all these elements, the model then takes the form of the following system:
\be\label{sys:homo}
\begin{cases}
\frac{d}{dt}\lambda(s,t)= -\omega (\lambda(s,t)-\lambda_b(s)) +r(\alpha(s,t))G(\lambda(s,t)),\\
\frac{d}{dt}\alpha(s,t)=  \sum_{i=1}^nA_i\delta_{t=t_i,s=s_i} -   h(\lambda) \alpha(s,t)+\theta \alpha_b(s).
\end{cases}
\ee

This system of equations has some similarities with the ones of the neural field equations in the presence of synaptic depressions, even though the non-linearities are not of the same form in the two cases. These equations, derived from the classical Wilson -- Cowan equations \cite{wc}, have the form of a mean field model describing neural dynamics. Tsodyks, Pawelzik and Markram \cite{Tsodyks1998} have introduced an extension of this model that takes into account synaptic depression, that is, the decrease of signal transmission at the synapse when there is a high level of activity. One can think of the rioting activity level here as the analogous of the neural activity of a population of neurons, and the social tension as the analogous of the quantity accounting for synaptic depression. The latter represents the amount of resources available at the synapses for signal transmission. In both cases, the second quantity modulates the reinforcement mechanism in the activity dynamics, and relaxes with a time scale
depending on the activity level. However, in addition to the non-linearities, there are two main differences. First, in the neural context, there is no direct external inputs to the synaptic depression field (hence no shock terms as above). Second, when modeling the spatio-temporal neural dynamics, synaptic depression acts on the influence between different neurons or locations \cite{Tsodyks1998,BressloffKilpatrick2010}, whereas in our case, we assume the social tension to be a field acting specifically on the feedback from the local activity on itself but not on the influence from other locations (see below). In other words, here, the social tension field enters as a modulation of the self-excitatory local dynamics of rioting activity.

In the form (\ref{sys:homo}), the model does not include any effects that one node might have on another.   However, recent popularization of social media
has enabled a kind of globalization of information and it is now rare to find isolated communities.  Indeed,
what happens in one city is quickly known world-wide and therefore it is natural to assume that the endogenous and exogenous effects spread in potentially
non-local ways.  The
role of the use of television networks, cell-phones, and social media, to name a few, is the subject of various studies \cite{Alcaide2011,Gonzalez-Bailon2011}.
In particular, the works \cite{Braha2012, Davies2013, Lang2014}, which we discussed in the introduction, either include or note the importance
of the spatial component and diffusion of information due not only to geographical proximity but also to social connections.  Furthermore, in \cite{Baudains2012}
the authors analyze the data from the 2011 London riots and conclude that there is evidence for the diffusion and clustering of rioting activities, which
is a call for spatial models that explore such factors.
Different neighborhoods, towns, cities, or even countries are influenced in spatially heterogeneous ways.  For example, Marseilles,
France is reputed to be a self-centered city that is not influenced by the rest of France, whereas rural communities might be highly
influenced by urban centers.
We assume that the social tension
in a node spreads to nodes which are within their social network.  This communications or social connections are encoded in the
matrix $C = (c_{ij})_{n\times n}$ with
\begin{align*}
c_{ij} = \left\{\begin{array}{ll}
1 &\text{if node $i$ has a social or communication connection to node $j$},\\
0 &\text{otherwise}.
\end{array}\right.
\end{align*}
At the same time, it is also natural to account for geographic proximity effects.  One is naturally concerned about what happens
to neighbors more so than what happens globally.  Indeed, it is clearly observed that the level of rioting activity spreads locally from
a place to neighboring districts or cities.
To account for this we assume that the level of rioting activity
diffuses to geographically neighboring locations.  To quantify the geographic proximity let us define the matrix $V = (v_{ij})_{n\times n}$ such that:
\begin{align*}
v_{ij} = \left\{\begin{array}{ll}
1 &\text{if node $i$ is a neighbor of node $j$},\\
0 &\text{otherwise}.
\end{array} \right.
\end{align*}
We denote the degree of a node $s$, {\it i.e.} the number of geographic neighbors (or edges) that node $s$ has, by $d_V(s)$ and the number of
social connections by $d_C(s)$.  Let $\eta$ represent the total influence that the nodes connected to $s$ have on node
$s.$  The actual effect that node $s'$ has on node $s$ is proportional to the total number
of nodes connected to node $s$.  Thus, when we include the influence of neighboring nodes, the
dynamics of the level of rioting activity is defined by:
\begin{align*}
\frac{d}{dt}\lambda(s,t)= \frac{\eta}{d_V(s)}\sum_j v_{sj}\lambda(j,t)  +r (\alpha(s,t)) G(\lambda(s,t)) -\omega (\lambda(s,t)-\lambda_b(s)),
\end{align*}
which can be written using the graph Laplacian $\Delta_g$:
\begin{align}\label{eq:la_nl}
\frac{d}{dt}\lambda(s,t)= \frac{\eta}{d_V(s)}\Delta_g\lambda(s,t)+\kappa\lambda(s,t) +r (\alpha(s,t)) G(\lambda(s,t)),
\end{align}
where $\kappa=\eta-\omega.$
The graph here is associated to neighboring locations.
The choice of the graph Laplacian is the simplest and most convenient for an initial analysis.  However, it must be said that
if $\lambda(x,t)$ represents, for example, the number of people (or fraction of the population) which are protesting or rioting, then more complex topologies
must be included in order to move toward a more realistic model.  In particular, the inclusion of transportation networks will be crucial.  These heterogeneities
 could lead to non-local, fractional, or non-linear diffusion.  Non-linear diffusion arises naturally if one takes into account that the diffusion should really be proportional
 to the level of rioting activity.  Of course, the behavior of a model with the more general diffusion can be significantly different; however,
the analysis of these cases is beyond the scope of this paper.

From equation \eqref{eq:la_nl} we see that the natural decay of the level of criminal activity must be larger than the
total influence of the neighbors of node $s$.  This gives rise to condition that $\omega>\eta$ to guarantee the
eventual decay or rioting activity.
We assume that the influence on the social tension is governed by the
social/communications network (which could be non-local geographically).  Incorporating this into the model gives
the dynamics of the social tension is governed by the following equation:
\begin{align}\label{eq:al_nl}
\frac{d}{dt}\alpha(s,t)= \frac{\eta}{d_C(s)}\sum_j c_{sj}\alpha(j,t) + \sum_{i=1}^n A_i\delta_{t=0,s=s_i} - h(\lambda(s,t)) \alpha(s,t) +\theta \alpha_b(s).
\end{align}
Combining equations \eqref{eq:la_nl} and \eqref{eq:al_nl} yields the final system on the network.  This system
can easily be generalized to include weights on the influence between any two nodes, in which case, we would use the weighted graph Laplacian.
The following table summarizes the
parameters.
{\small
\begin{table}[H]
\begin{center}
    \begin{tabular}{ | l | l | p{5cm} |}
    \hline
    \textcolor{blue}{Parameters}  & \textcolor{blue}{Description} \\ \hline\hline
 	$\omega$ & Decay rate of the rioting activity level $\lambda$.\\ \hline
	$\lambda_b$ & Base rioting activity level.\\ \hline
	$A_i$ & Strength of the shock at time $t_i$ and location $s_i$.  \\ \hline
	$\theta$ & Decay rate of the social tension value $\alpha$.   \\ \hline
	$p$ & Level of influence that $\lambda$ has on the decay of the social tension.\\ \hline
	$\alpha_b$ & Base social tension value.\\ \hline
	$\beta$ & Sharpness of the transition between the relaxed state and excited state.\\ \hline
	$a$ & Critical social tension value.\\ \hline	
	$\eta$ & Strength of the influence of neighboring nodes.\\ \hline
    \end{tabular}
\end{center}
\end{table}}

We can also consider a stochastic version of this model, which takes into account, for example, the effects of the media
and the climate.  Let $X_t$
represent a Brownian or L\'evy process, then we obtain the stochastic version of the model:
\begin{subequations}\label {eq:3}
\begin{align}
&d\lambda(s,t)= \frac{\eta}{d_V(s)}\Delta_g\lambda(s,t)dt+\kappa\lambda(s,t)dt +r (\alpha(s,t)) G(\lambda(s,t)) dt + \sigma \lambda (s,t) dX_t,\\
&d\alpha(s,t)= \frac{\eta}{d_C(s)}\sum_j c_{sj}\alpha(j,t)dt + \sum_{i=1}^n A_i\delta_{t=0,s=s_i} - \left(h(\lambda(s,t)) \alpha(s,t) -\theta \alpha_b(s) \right)dt.
\end{align}
\end{subequations}

System \eqref{eq:3} is a coupled system of stochastic differential equations.

\section{Numerical experiments of the single site model}\label{sec:num-exp}To demonstrate the flexibility of the model to capture the various global behaviors observed in
a variety of real-world riots, we begin with some numerical experiments.
For this purpose, we consider the following model on single node
without any noise and $\alpha_b=\lambda_b= 0$:
\begin{subequations}
\label{sys:homo_1}
\begin{align}
\frac{d}{dt}\lambda(t) &=-\omega \lambda(t) +r(\alpha(t))G(\lambda(t)) ,\label{sys:homo_1a}\\
\frac{d}{dt}\alpha(t)&= \sum_{i=1}^nA_i \delta_{t=t_i} -  h(\lambda(t))\alpha(t). \label{sys:homo_1b}\\
\lambda(0)&=\lambda_0\quad\text{and}\quad\alpha(0) =\alpha_0,\label{sys:homo_1c}
\end{align}
\end{subequations}
with either one or two shocks ($n=1$ or $n=2$).
\begin{enumerate}
\item {\it Slow relaxation of rioting activity}:  The first simulation results in an outburst of rioting activity that relaxes slowly - refer to Figure \ref{fig:slow}.
The outburst was a result of one shock at time $t=0$ that was intense enough to push the social tension above the critical threshold.
This
type of relaxation was observed during the 2005 French riots.
\item {\it Fast relaxation of the rioting activity}: The second simulation results in an outburst of rioting activity that suddenly decreases - refer to Figure \ref{fig:fast}.
This type of relaxation was observed during the 2011 London riots.  The driving parameter here is $\beta$ (all other parameters
where unchanged from the previous simulation),
as the transition in the function $r$ is sharper the self-relaxation (and self-excitation) is also sharper.

\item {\it Delayed outburst of activity:}  The third simulation illustrates the case of two exogenous events: the first event occurring at
time $t=0$ and the second event occurring at $t=12$.  The first external event is not strong enough to lead to an outburst of activity, but
it does increase the tension in the system.  Thus, when the second external event occurs, this drives the social tension above the critical threshold
leading to what we call a delayed burst of activity - refer to Figure \ref{fig:delay}.  This is similar to what happened in the 2001 L.A. riots: the first event would correspond
to the beating of King and the second event to the exoneration of the police officers involved.
\item{\it Two bursts of activity:} The final simulation results in two bursts of rioting activity.  In this case the first shock was strong enough
to lead to a bursts of activity that settled down before a second shock occurred (of smaller intensity), which reignited the activity - see Figure \ref{fig:double}.
This was observed, for example, in the recent
protests in Ferguson, Missouri.

\end{enumerate}
\section{Analysis of the single site model}
\label{sec:discrete-analysis}
We now provide a more rigorous analysis of the system given by \eqref{sys:homo_1} by first analyzing it in the absence of shocks, % section \ref{ec:disc-noshock},
and then considering the case of a single shock $(n=1)$, and finally the case of repeated shocks.

\subsection{Absence of shocks}
\label{sec:disc-noshock}
We begin with the study of the system:
\begin{subequations}
\label{sys:homo_1bis}
\begin{align}
\frac{d}{dt}\la(t) &=\Phi(\la(t),\alpha(t)), \;\; \Phi(\la,\alpha) := -\omega (\la-\la_b)  +r(\alpha)G(\la)  ,\label{sys:homo_1bis_a}\\
\frac{d}{dt}\alpha(t)&= \Psi(\la(t),\alpha(t)), \;\; \Psi(\la,\alpha) := \theta \alpha_b -  \alpha h(\la)  \label{sys:homo_1bis_b}\\
\la(0)\quad&=\lambda_0\quad\text{and}\quad\alpha(0) =\alpha_0, \label{sys:homo_1bis_c}
\end{align}
\end{subequations}
where $\la_b$ is assumed to be small relative to the maximum of the function $G$.

\paragraph{Case with $\la_b=0$ and $\alpha_b=0$.}
A preliminary remark is that the dynamics $d\lambda/dt=\Phi(\la(t),\bar \alpha)$ for a fixed $\bar \alpha$ value, has
as stable fixed point at $\la=0$ for $\omega > r(\bar\alpha) G'(0).$  On the other hand,
for $\omega < r(\bar\alpha) G'(0)$, the stable fixed point is $\la^*(\bar \alpha)>0$ which is the non-zero solution of $\Phi(\la,\bar\alpha)=0$
- for example $\la^*=z_0-\omega/r(\bar \alpha)$ when $G(z)=z(z_0-z)$.
We thus choose the parameters  such that
\be
\label{hyp:z0r0<w<z0}
G'(0) r(0) < \omega < G'(0)\quad\text{and}\quad \lim_{\alpha \rightarrow \infty}r(\alpha)=1,
\ee
so that, at constant $\alpha$, the attractive state is a no-riot state for small tension and a
 non-zero rioting state for large tension.

The dynamics are easily understood by looking at the nullclines of \eqref{sys:homo_1bis} in the plane $(\alpha, \lambda)$. The nullcline associated to $\la$, $\Phi(\la,\alpha)=0$, is  given by
$\la=0$, and, for $\alpha>\alpha_c$ with $r(\alpha_c)=r_c:=\omega/G'(0)$, the function  $\la_1(\alpha)$, which increases from $0$ at $\alpha=\alpha_c$ to its maximum value $\la^*\leq z_0$ as $\alpha\rightarrow \infty$ ($r(\alpha) \rightarrow 1$ in this limit). This is illustrated on Figure \ref{fig:nullcline} for $G=z(z_0-z)$. During the evolution of \eqref{sys:homo_1bis}, $\la$ increases everywhere under this curve $(\alpha,\la_1) $ if $\la > 0$, and decreases elsewhere.

Since $\alpha_b=0$, the nullcline associated to $\alpha$ is simply $\alpha=0$: indeed $d\alpha(t)/dt$ is strictly negative for any $\alpha>0$.
One can then conclude that there is a single attractive fixed point, $\alpha=0,\lambda=0$. If the initial values are in the domain left/above the $\la$-nullcline, this fixed point is reached with continuously decreasing $\alpha$ and $\la$. If one starts below the nullcline, with $\lambda(t=0)>0$, the fixed point is reached after an excursion at high $\lambda$. While $\alpha$ is permanently decreasing, $\lambda$ increases until the nullcline is reached with a null slope $d\la/d\alpha=0$, and then decreases towards $0$. If $\la=0$ and $\alpha>\alpha_c$, the system is at an unstable fixed point. However, if one adds any small perturbation,  the system will be driven into the $\la >0$ domain where the rioting excursion will occur.
For an illustration, see Figure \ref{fig:nullcline} (the $\la-$nullcline is the dashed-red curve).

\begin{figure}%[H]
  \begin{center}
  \subfloat[Slow self-relaxation]{\label{fig:slow}\includegraphics[width=0.45\textwidth]{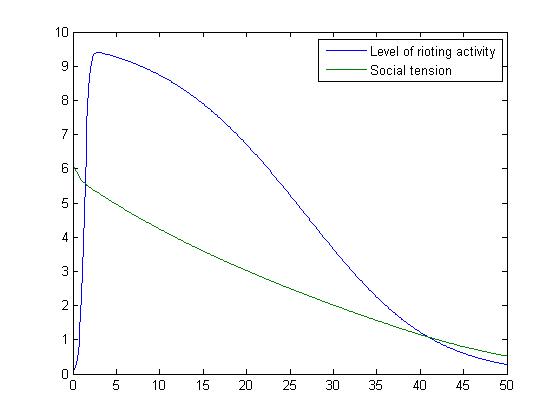}}
  \subfloat[Sharp self-relaxation]{\label{fig:fast}\includegraphics[width=0.45\textwidth]{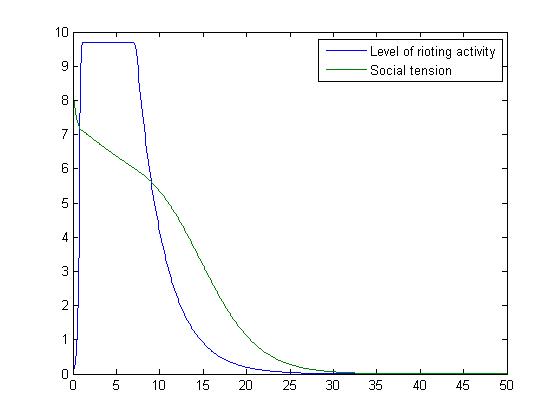}}\\
  \subfloat[Delayed burst of activity]{\label{fig:delay}\includegraphics[width=0.45\textwidth]{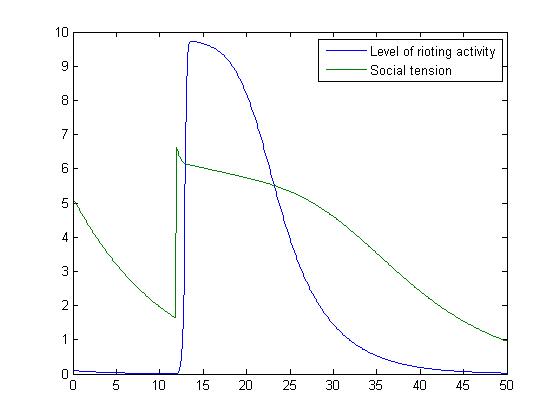}}
  \subfloat[Double burst of activity]{\label{fig:double}\includegraphics[width=0.45\textwidth]{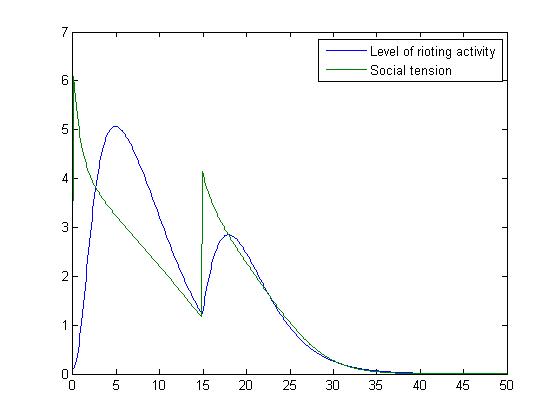}}
 \caption{Simulations of the system given by \ref{sys:homo_1}). The solutions illustrated in \ref{fig:slow} use parameters: $z_0 = 10;\omega = .2; A = 5;\theta=.1$; %\eta = .19;
$p=1;\beta=10,a=6, t_1=0, t_2 =12.$  The solutions illustrated in \ref{fig:fast} use parameters:
 $ z_0 = 10;\omega = .2; A = 6;\theta=.1$; %\eta = .19;
$p=1;\beta=1,a=6, t_1=0.$  The solutions illustrated in \ref{fig:delay} use parameters: $z_0 = 10; \omega = .3; A = 8;\theta=.3$; %\eta = .19;
$p=1, \beta = 100, a = 6.$.  Finally, the solutions illustrated in \ref{fig:double} use parameters: $z_0 = 10;\omega = .3; A_1 = 6;A_2 =3, \theta=.4$; %\eta = .19;
$p=1, \beta = 1, a=6$.  %}
}
  \end{center}
\label{fig:sim_single}
\end{figure}

\vspace*{5pt}\paragraph{Case with $\alpha_b>0$, $\la_b>0$.}

When $\la_b>0$, the fixed point at $\la=0$ is replaced by a fixed point at a small $\la$ value. Along the $\la-$nullcline, as $\alpha$ increases, $\la$ remains almost constant up to $\alpha_c$, and then $\la$ increases sharply as for the case of $\la_b=0$. Because $\la\geq0$, there is no more an unstable fixed point at a small $\la$ value.

For $\alpha_b>0$, the $\alpha-$nullcline is a curve close to the vertical line for small $\alpha_b$, and moves towards the right with smaller slopes. For small $\alpha_b$, the two nullclines intersect in the range where $\la$ is small, and this gives the unique stable fixed point: starting from any initial condition, the trajectory will end at this point, with a riot-excursion if the initial values $\alpha_0, \la_0$ are on the right of the $\la-$nulcline. For a large $\alpha_b$ value, the intersection is in the range where $\la$ is large. The unique fixed point is with a high level of rioting activity.
If $\beta$ is larger than some threshold, there is an intermediate domain of $\alpha$ values for which the two nullclines have three intersects (see Figure \ref{fig:ncB_b}). In that case the intermediate intersect is an unstable fixed point, and the two others are stable fixed points: one with low rioting activity, one with high rioting activity. One has here a structure with a classical discontinuous, `$1^{st}$ order' transition, leading to abrupt changes of behavior and hysteresis phenomena. If $\alpha_b$ increases starting from a very small value, there is a critical value $\alpha_b^1$ at which the fixed point with high $\la$ value appears. As $\alpha_b$ increases, the system may remains on the low fixed point, until a second critical value, $\alpha_b^2$, where the lower fixed point disappears: the system suddenly jumps to the state of high rioting activity. If now $\alpha_b$ decreases smoothly, the system will remain on the high $\lambda$ fixed until $\alpha_b=\alpha_b^1$ where the high fixed point
disappears, and the system is abruptly brought back to the no riot state. This is reminiscent of the ending of the 2011 London riots, which stopped unexpectedly abruptly. The decrease in temperature and arrival of rain has been among the factor contributing to this event. One may assume that the weather contributes to the level of $\alpha_b$.

\begin{remark}
From the Poincar\'e-Bendixon theorem it follows that there is always convergence of the solutions to \eqref{sys:homo_1bis} to a stationary state. Indeed, the 
Poincar\'e-Bendixon theorem states that a trajectory necessarily either converges to a stationary point or converges to a cycle. 
But here, owing to the second equation in which the right-hand side has a sign, there are no cycles. Note that this theorem holds for both cases $p<0$ and $p>0$ it is only the sign that matters. 
\end{remark}

\subsection{Adding a single shock} First consider the case $\la_b=0$ and $\alpha_b=0$. With an initial condition at small $\alpha$ and $\la$, a single shock $A \delta_{t=0}$ on $d\alpha$ sends the system at a higher value of $\alpha$.
If the nullcline is not crossed, both $\alpha$ and $\lambda$ then decrease smoothly towards the fixed point $(0,0)$. If the nullcline is crossed, the trajectory makes a rioting excursion as described above.
If the initial value of $\la$ is zero, a strong shock sends the system onto the unstable fixed point $\la=0$. However, if one adds any small perturbation,  the system will be driven in the $\la >0$ domain where the riot excursion will occur.

In the case when the base level of rioting activity is zero and $\alpha_b = 0$ we observe an eventual
self-relaxation.  The formal proof of eventual relaxation is as follows.
\begin{prop} [Burst of activity with eventual self-relaxation]\label{prop:1}
Let $n=1$ and any $A_1>0.$  If $\la(t)$ and $\alpha(t)$ are solutions to \eqref{sys:homo_1}
 with $G'(0) r(0) < \omega $, and $\la(t=0)>0$, then
\[
\lim_{t\rightarrow \infty} \la(t) = 0\quad\text{and}\quad \lim_{t\rightarrow \infty} \alpha(t) = 0.
\]
\end{prop}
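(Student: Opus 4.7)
The plan is to exploit the fact that after the single shock at $t=t_1$ the system is autonomous and the $\alpha$-dynamics are strictly dissipative: $\alpha(t)$ decays to $0$, and once $\alpha$ is small the hypothesis $G'(0) r(0) < \omega$ makes $\lambda = 0$ linearly attracting for the limiting $\lambda$-dynamics. I would carry this out in three steps.

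First, I would establish invariance and a priori bounds: $\lambda(t), \alpha(t) > 0$ and $\lambda(t) \le M := \max(\lambda(0), z_0)$ for all $t > 0$. Positivity follows from uniqueness for the ODE system together with the observation that $\lambda \equiv 0$ and $\alpha \equiv 0$ are solutions (since $G(0) = 0$), so the open set $\{\lambda > 0,\, \alpha > 0\}$ is invariant; after the shock $\lambda(0) > 0$ by hypothesis and $\alpha(t_1^+) = \alpha(t_1^-) + A_1 > 0$. For the upper bound, $G(\lambda) \le 0$ for $\lambda \ge z_0$ forces $\lambda' \le -\omega \lambda < 0$ whenever $\lambda \ge z_0$. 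Next, for $\alpha(t)\to 0$: using the bound $\lambda \le M$ and the explicit form $h(\lambda) = \theta/(1+\lambda)^p > 0$, we have $h(\lambda(t)) \ge h_{\min} := h(M) > 0$, and integrating the $\alpha$-equation yields $\alpha(t) \le \alpha(t_1^+)\,e^{-h_{\min}(t - t_1)}$, so $\alpha(t) \to 0$ exponentially.

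For the decay $\lambda(t) \to 0$, since $r(0) G'(0) < \omega$, continuity and monotonicity of the sigmoid $r$ allow me to pick $\epsilon > 0$ so that $\gamma := \omega - r(\epsilon) G'(0) > 0$. By the previous step there exists $T$ with $r(\alpha(t)) \le r(\epsilon)$ for all $t \ge T$. Using the KPP-type one-sided bound $G(\lambda) \le G'(0)\lambda$ for $\lambda \ge 0$, which is valid in the canonical example $G(z) = z(z_0 - z)$ (and more generally whenever $G$ is concave on $[0, z_0]$ with $G \le 0$ beyond), I would conclude $\lambda'(t) \le -\gamma\, \lambda(t)$ for $t \ge T$, and Gronwall's inequality then gives $\lambda(t) \to 0$ exponentially.

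The delicate point, which I expect to be the main obstacle, is precisely the pointwise inequality $G(\lambda) \le G'(0)\lambda$ used in the last step. It is not part of the abstract hypotheses \eqref{def:G}; without it the limit equation $\lambda' = -\omega \lambda + r(0) G(\lambda)$ might admit additional positive stable equilibria, so that $\lambda$ could in principle be trapped away from $0$. For fully general $G$ satisfying only \eqref{def:G}, I would instead appeal to the Markus--Thieme theory of asymptotically autonomous systems to locate the $\omega$-limit set of $(\lambda, \alpha)$ inside the chain-recurrent set of the limit flow intersected with $\{\alpha = 0\}$, and then use a basin-of-attraction argument, exploiting the smallness of $\alpha$ from some time onward, to rule out accumulation at any nonzero equilibrium.
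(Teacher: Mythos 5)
Your argument is essentially the paper's: both solve/bound the $\alpha$-equation using an a priori bound on $\lambda$ to get $\int_0^t h(\lambda(s))\,ds \gtrsim t$, hence exponential decay of $\alpha$ to zero, and then conclude for $\lambda$ from $r(0)G'(0)<\omega$; you simply write out the final step (which the paper compresses into ``which allows us to conclude'') and correctly flag the implicit sublinearity hypothesis $G(\lambda)\le G'(0)\lambda$ that both proofs need. One caveat on your fallback: Markus--Thieme theory cannot rescue the statement for a fully general $G$ satisfying only \eqref{def:G}, because without sublinearity the limit equation $\dot\lambda=-\omega\lambda+r(0)G(\lambda)$ may possess a positive stable equilibrium that attracts the trajectory, so the proposition genuinely requires that extra (KPP-type) hypothesis rather than a more clever argument.
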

\begin{proof}
We can solve for the social tension explicitly:
\[
\alpha(t) = A_1 e^{-\int_0^t h(\lambda(s))\;ds},
\]
which we then substitute into the equation for $\la$ to obtain:
\[d\la(t)= -\omega \la(t) dt +r\left(A_1 e^{-\int_0^t h(\lambda(s))\;ds)}\right)G(\la(t))  dt.\]
Recalling that $\la^*$ is the maximum value of $\la$, we have the following lower bound
\[
\int_0^t h(\lambda(s))\;ds \ge \frac{t}{1+\la^*},
\]
which allows us to conclude.
\end{proof}
Even though the level of rioting activity eventually ceases (or relaxes to its base level of activity $\lambda_0$) as does the social tension in the system, it is clear that
the higher the intensity of the triggering event the longer the bursts of rioting activity will last.  Indeed, if the shock is strong enough, the maximum value of $\la$ along its trajectory is reached on the nullcline in the domain where the nullcline is in its asymptotic regime $\la \sim \la^*$, so that the activity remains at its maximum for a long period time before being able to decrease - see again Figure  \ref{fig:nullcline} for an illustration.  In the same vein, the parameter $\theta$ plays, in some sense, an even more important role on how fast the solution decays.  Indeed, $\alpha$ decays exponentially fast at a rate
which depends on $\theta$: smaller values of $\theta$ lead to a slower decay.
More formally, we prove that
when the shock is sufficiently large the intensity will remain close to its maximum value
for long periods of time.

\begin{prop}[Long-periods of rioting activity due to a strong shock]\label{prop:2}
Given arbitrary $L>0$ and $\delta>0,$ there exists $A_0=A_0(\delta ,L)$ and $t_o>0$ such that if $A\ge A_o$ then
\[
\la(t) > \la^*-\delta \quad \forall \quad t\in [t_0,t_0+L].
\]
\end{prop}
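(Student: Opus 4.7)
The strategy is to compare the coupled dynamics against an autonomous scalar ODE for $\lambda$ whose stable equilibrium can be made arbitrarily close to $\lambda^*$, and to exploit a very strong shock to keep the social tension above the threshold at which this comparison is valid for a sufficiently long time window.

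First, using $r(\alpha) \to 1$ as $\alpha \to \infty$ together with \eqref{hyp:z0r0<w<z0}, I would pick $\eta \in (0,1)$ small enough that $(1-\eta)G'(0) > \omega$ and that the unique positive zero $\bar{\lambda}_\eta$ of $u \mapsto -\omega u + (1-\eta) G(u)$, which exists by \eqref{def:G}, satisfies $\bar{\lambda}_\eta > \lambda^* - \delta/2$. This is possible by continuity of this zero in $\eta$, since $\bar{\lambda}_{\eta=0} = \lambda^*$. Next, choose $\alpha_\eta$ with $r(\alpha) \geq 1-\eta$ for all $\alpha \geq \alpha_\eta$. From the explicit formula $\alpha(t) = A\exp(-\int_0^t h(\lambda(s))\,ds)$ already used in the proof of Proposition \ref{prop:1}, together with the bound $h \leq h(0) = \theta$, we obtain $\alpha(t) \geq A e^{-\theta t}$, so that $\alpha(t) \geq \alpha_\eta$ throughout the interval $[0, T(A)]$, where $T(A) := \theta^{-1}\log(A/\alpha_\eta)$.

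Next, I would observe that $[0, z_0]$ is invariant for $\lambda$: indeed $\dot{\lambda}|_{\lambda=z_0} = -\omega z_0 + r(\alpha) G(z_0) \leq 0$ by \eqref{def:G}, and $\lambda$ cannot become negative since $\dot\lambda|_{\lambda=0}=r(\alpha)G(0)=0$. Hence $G(\lambda(t)) \geq 0$ throughout, and on $[0, T(A)]$ the activity satisfies the differential inequality $\dot\lambda \geq -\omega\lambda + (1-\eta)G(\lambda)$. The scalar comparison principle (the right-hand side being locally Lipschitz) then yields $\lambda(t) \geq v(t)$ on $[0, T(A)]$, where $v$ solves the autonomous Cauchy problem $\dot v = -\omega v + (1-\eta) G(v)$, $v(0) = \lambda(0) > 0$. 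This ODE has $v=0$ as an unstable equilibrium and $v = \bar{\lambda}_\eta$ as the globally attracting equilibrium on $(0,\infty)$, so there exists $t_0 > 0$, depending only on $\delta$ and the fixed initial datum $\lambda(0)$, such that $v(t) > \lambda^* - \delta$ for every $t \geq t_0$. Setting $A_0 := \alpha_\eta \exp(\theta(t_0+L))$ guarantees $T(A) \geq t_0 + L$ for every $A \geq A_0$, and the comparison then gives $\lambda(t) \geq v(t) > \lambda^* - \delta$ on the entire window $[t_0, t_0+L]$.

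The only mildly delicate points are the continuous dependence $\bar{\lambda}_\eta \to \lambda^*$ as $\eta \to 0^+$ and the invariance of $[0, z_0]$; both follow from \eqref{def:G}--\eqref{hyp:z0r0<w<z0} and standard scalar ODE arguments, so I do not anticipate any serious obstacle. The quantitative heart of the argument is simply that a shock of strength $A$ buys $\Theta(\log A)$ units of time during which the activity equation is essentially autonomous and converges exponentially to a state within $\delta/2$ of $\lambda^*$; the coupling to the (decaying) tension field is handled by the crude lower bound $\alpha \geq A e^{-\theta t}$.
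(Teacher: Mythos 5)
Your proposal is correct and follows essentially the same route as the paper's proof: solve for $\alpha$ explicitly, use the boundedness of $h$ to get $\alpha(t)\geq Ae^{-rt}$ and hence $r(\alpha)\geq 1-\eta$ on a window of length growing like $\log A$, then compare $\lambda$ from below with the autonomous ODE $\dot y=-\omega y+(1-\eta)G(y)$ whose positive equilibrium tends to $\lambda^*$ as $\eta\to 0$. Your added details (invariance of $[0,z_0]$ to justify the sign of $G$, and the explicit formulas for $T(A)$ and $A_0$) only make explicit steps the paper leaves implicit.
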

\begin{proof}
From the  $\alpha$ equation \eqref{sys:homo_1bis_a} we get
\[
\alpha(t) = A e^{- \int_0^t h(\lambda(s))\;ds},
\]
where $A$ can be interpreted as the initial shock. To take into account a pre-existing base rate
$\alpha_b$, we could also consider $A= A'+ \theta \alpha_b$ where $A'$ is the initial shock and the result is the same. Since $h$ is bounded from above, from this formula we infer that there exists some $r>0$ such that $\alpha(t) \geq  A e^{-rt}$ for all $t\geq 0$. Therefore, given $\eta>0, T>0$, there exists $A_0$ such that for all $A\geq A_0$ we get
\[
r(\alpha (t)) \geq 1 - \eta, \quad\text{for all} \quad t\in [0, T].
\]
Then, consider the equation
\[
\dot{y} = - \omega y + (1-\eta) G(y), \quad y(0)= \lambda_0>0.
\]
Clearly, $\lambda(t) \geq y(t)$ for all $t\in[0, T]$. By our assumptions, this system admits a unique globally asymptotically stable equilibrium
$z_\eta >0$. Hence, given $\delta>0$, there exists $t_0>0$ such that $y(t)\geq z_\eta - \delta/2$, for all $t\geq t_0$.
Now, $z_\eta \to \lambda^*$ as $\eta\to 0$ and therefore we can choose $\eta$ sufficiently small so that $z_\eta \geq \lambda^* -\delta/2$. Gathering these properties, we see that given $\delta>0$ and $L>0$, we can choose accordingly
$\eta>0$, then $t_0\geq 0$, and then $T = t_0 +L$ for which we get $A_0$ such that
for all $A\geq A_0$, $\lambda(t)\geq \lambda^* - \delta$ on the interval of time $[t_0, t_0 +L]$. This completes the proof of the proposition.
\end{proof}

Figure \ref{fig:dynamics_B} illustrates the dynamics for $\la_b>0, \alpha_b>0$. The general features described in the previous case remain, with two important aspects. 
First, as discussed previously, for $\alpha_b$ larger than some threshold there is a fixed point at high value of $\la$: obviously a strong enough shock will make the system evolve towards this high activity state. 
Maybe more interesting is the behavior below the transition. Suppose the system is `at rest', that is at its fixed point in the absence of shock. Then any shock will drive the system in the 
domain where $\la(t)$ increases. However, if the shock is small enough, the system remains into the domain where the $\la-$nullcline is almost flat, the rioting activity remains very small. See Figures \ref{fig:dynB_a} and \ref{fig:dynB_b}.

\subsection{Repeated shocks}
If multiple exogenous events occur, their intensity and proximity in time will play a role on whether the system remains excited or eventually relaxes.
With strong enough shocks at a high frequency, the shocks will maintain the system within the domain below the $\la$-nullcline, and since in that regime $\la$ always increases,
the rioting activity will remain close to its maximum value $\la^*$.  With shocks at moderate frequency, a given shock occurs while the system is in the regime where $\la$ is decreasing,
and the shock sends back the system under the nullcline. One has then a (regular or irregular) cyclic activity.  With weak enough stimulations, in amplitude and/or frequency,
the system will eventually relax. For this single site case, whatever the shocks may be, $\la^*$ is the maximum maximorum possible value of $\la$, whenever the initial condition is smaller than $\la^*$ (which is of course the case with the hypothesis of no riots at $t=0$).

\begin{figure}%[H]
  \center
  \subfloat[A small and a moderate shock]{\label{fig:nc_a}\includegraphics[width=0.5\textwidth]{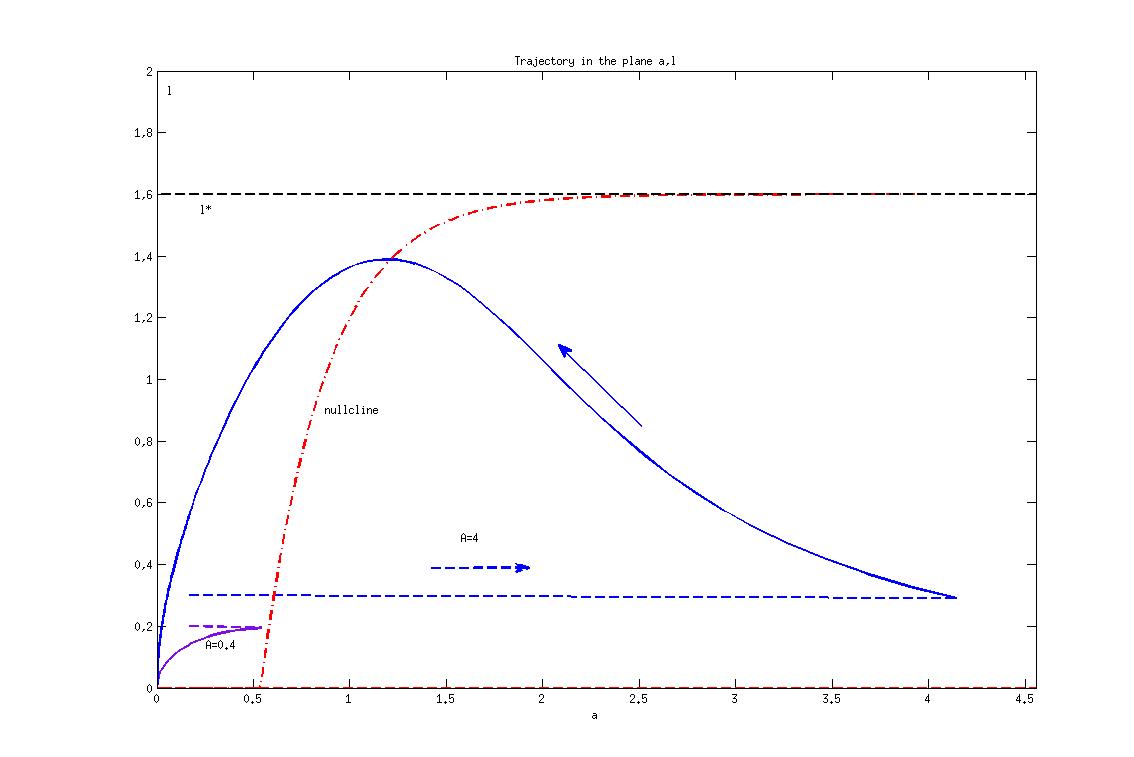}}
  \subfloat[Strong shock]{\label{fig:nc_b}\includegraphics[width=0.5\textwidth]{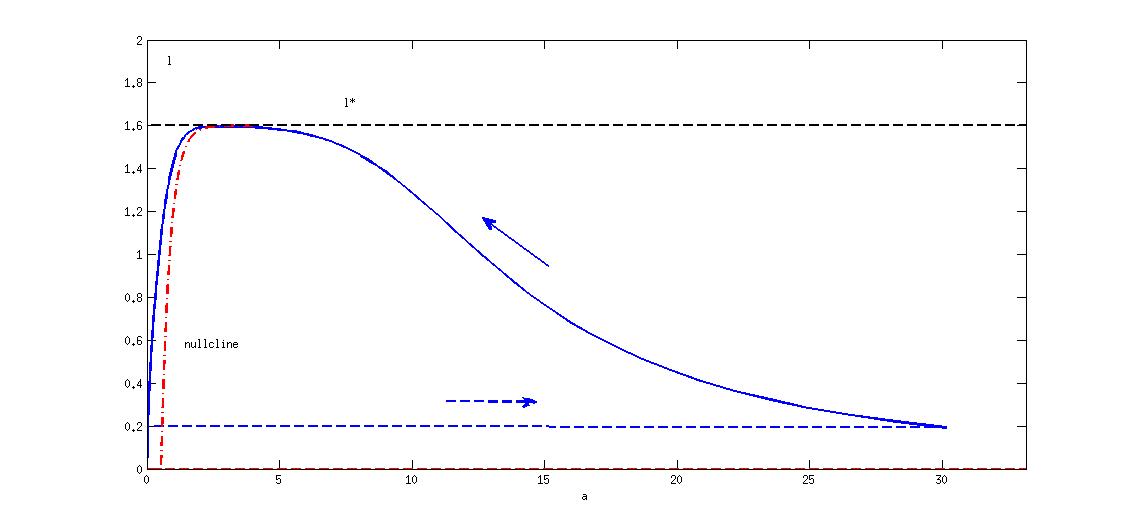}}\\
  \subfloat[Dynamics]{\label{fig:nc_c}\includegraphics[width=0.5\textwidth]{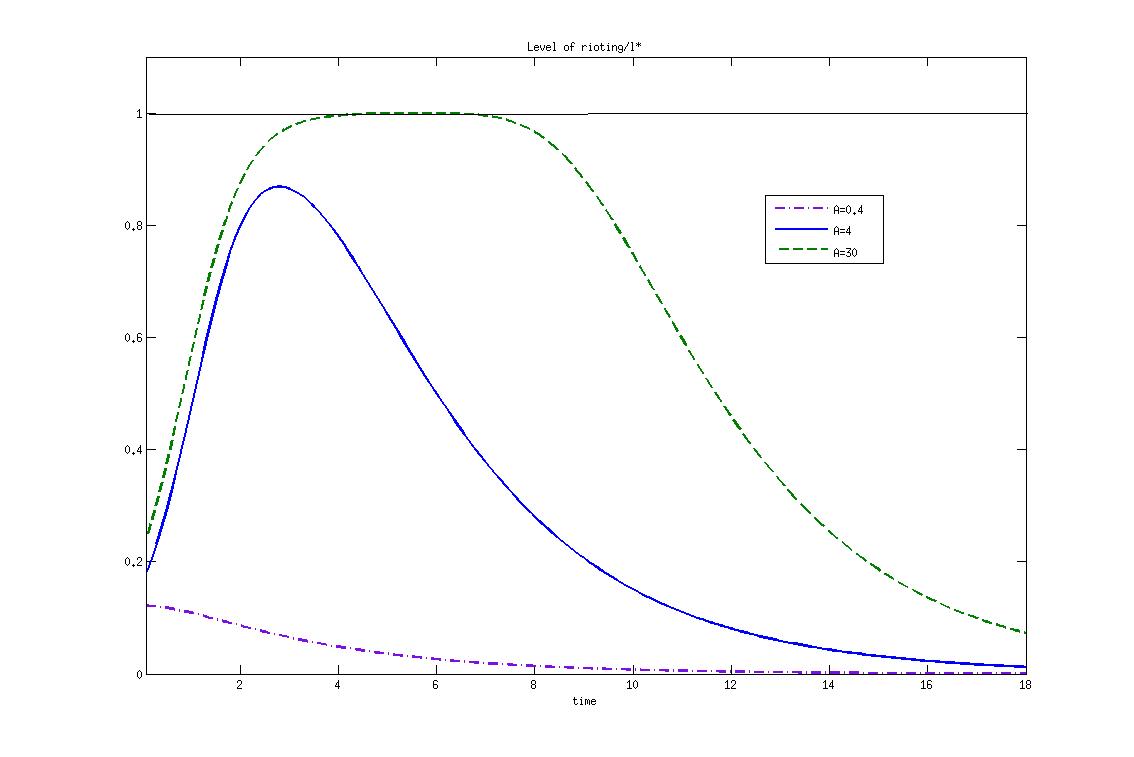}}
 \caption{Simulations of the system given by \eqref{sys:homo_1}
with $\omega = .4, z_0 = 2, \beta = 3, a=1, \theta =.7, p = .7$ and varying intensity of the triggering event: a small shock $A=0.4$, a moderate one $A=4$ and a strong one, $A=30$. Figure \ref{fig:nc_a} shows the nullcline $d\lambda/d\alpha=0$, and the  trajectories for the small (no riot generated) and the moderate (generation of riots) shocks. Figure \ref{fig:nc_b} shows the case of the strong shock (note the difference in the range of $\alpha$ values). In all cases the initial values are chosen with small $\alpha$ and $\la$ values. The dashed lines indicate the jump in $\alpha$ value due to the shock. Figure \ref{fig:nc_c} shows the time dynamics of the rioting activity in the three cases.}\label{fig:nullcline}
\end{figure}

To understand the roles that both the time between events and
the intensities of the shocks play in either leading an eventual relaxation to the base activity value or the perpetual self-excited system,
we explore the case when there are
periodic exogenous events, each with intensity $A$ and period $T$.  It is useful to define the point source term:
\be \label{def:per}
S(t) = A\sum_{i=0}^\infty \delta_{t=iT}.
\ee

Thus, we consider system \eqref{sys:homo_1} with the point source term given by $S(t)$ as defined in \eqref{def:per}.
\begin{prop}[Periodic excitable systems]
Let $\la(t)$ be the solution to system \eqref{sys:homo_1} with point source term given by \eqref{def:per}.
Then, for any given $T$ and for any given $\delta>0$, there exists an $A^*=A^*(T,\delta,\lambda_0)$ such that for all $A\geq A^*$ we have
\[\liminf_{t\rightarrow \infty}\lambda(t) \geq \la^{*}
 - \delta . \]
\end{prop}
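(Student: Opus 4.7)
The key idea is that periodic shocks of amplitude $A$ maintain $\alpha(t)$ uniformly large, which forces $r(\alpha(t))$ close to $1$, so that the comparison argument of Proposition \ref{prop:2} extends from a finite horizon $[t_0,t_0+L]$ to all of $[0,\infty)$. I would proceed in three steps.

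\textbf{Step 1 (uniform lower bound for $\alpha$).} Between two consecutive shocks at $iT$ and $(i+1)T$, equation \eqref{sys:homo_1b} reads $\dot\alpha = -h(\la)\alpha$. Since $h(\la)\le h(0)=\theta$, integrating gives
\begin{equation}
\alpha(t)\ge \alpha(iT^+)\,\exp\!\left(-\int_{iT}^t h(\la(s))\,ds\right)\ge A\,e^{-\theta T}\quad\text{for all }t\in[iT,(i+1)T],
\end{equation}
where the second inequality uses $\alpha(iT^+)\ge A$ (a shock of amplitude $A$ is added to a nonnegative $\alpha$ at each $iT$). The bound is independent of $i$, hence $\alpha(t)\ge A\,e^{-\theta T}$ for every $t\ge 0$.

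\textbf{Step 2 (scalar ODE comparison).} Fix $\eta>0$. By monotonicity of $r$ and $\lim_{\alpha\to\infty}r(\alpha)=1$, choose $A$ large enough that $r(A\,e^{-\theta T})\ge 1-\eta$, so that $r(\alpha(t))\ge 1-\eta$ uniformly in $t$. Since $G(z_0)=0$ and so $\dot\la\vert_{\la=z_0}=-\omega z_0<0$, the trajectory stays in $[0,z_0)$ provided $\la_0<z_0$, so $G(\la(t))\ge 0$. Thus \eqref{sys:homo_1a} yields
\begin{equation}
\dot\la(t)\;\ge\;-\omega\,\la(t)+(1-\eta)\,G(\la(t)).
\end{equation}
Comparing with the autonomous equation $\dot y=-\omega y+(1-\eta)G(y)$, $y(0)=\la_0>0$, gives $\la(t)\ge y(t)\to z_\eta$, where $z_\eta$ is the nonzero globally asymptotically stable equilibrium. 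Since $z_\eta\to\la^*$ as $\eta\to 0$, choosing first $\eta$ so small that $z_\eta\ge \la^*-\delta/2$ and then $t_0$ large enough that $y(t)\ge z_\eta-\delta/2$ for $t\ge t_0$ produces the threshold $A^*=A^*(T,\delta,\la_0)$ for which $\liminf_{t\to\infty}\la(t)\ge \la^*-\delta$.

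\textbf{Main obstacle.} The substantive point is Step 1: establishing a lower bound on $\alpha$ that holds uniformly across all periods, not merely on a single finite window as in Proposition \ref{prop:2}. This works precisely because every impulse resets $\alpha$ above $A$ no matter how much it has decayed in the preceding interval, so the worst-case one-period decay $e^{-\theta T}$ is the uniform factor. Once this is in hand, the remainder is a direct scalar comparison argument essentially identical to the one used in the proof of Proposition \ref{prop:2}; the only additional care is the a priori confinement $\la(t)<z_0$ to keep $G(\la)\ge 0$ so the comparison inequality has the right sign.
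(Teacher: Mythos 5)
Your proof is correct and follows the same route the paper intends: the paper gives no argument beyond declaring the proposition ``a straightforward consequence of Proposition~2,'' and your write-up is exactly the natural way to make that precise, reusing the comparison with $\dot y = -\omega y + (1-\eta)G(y)$ and the fact that $z_\eta \to \la^*$. Your Step~1 --- the uniform-in-time bound $\alpha(t)\ge A e^{-\theta T}$, valid because each impulse resets $\alpha$ above $A$ while $h(\la)\le\theta$ caps the one-period decay --- is precisely the ingredient needed to upgrade the finite-window conclusion of Proposition~2 to a $\liminf$ statement, and your remark on confining $\la$ to $[0,z_0)$ so that $G(\la)\ge 0$ is a worthwhile extra precaution the paper omits.
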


\begin{figure}%[H]
  \center
  \subfloat[Small $\beta$]{\label{fig:ncB_a}\includegraphics[width=0.45\textwidth]{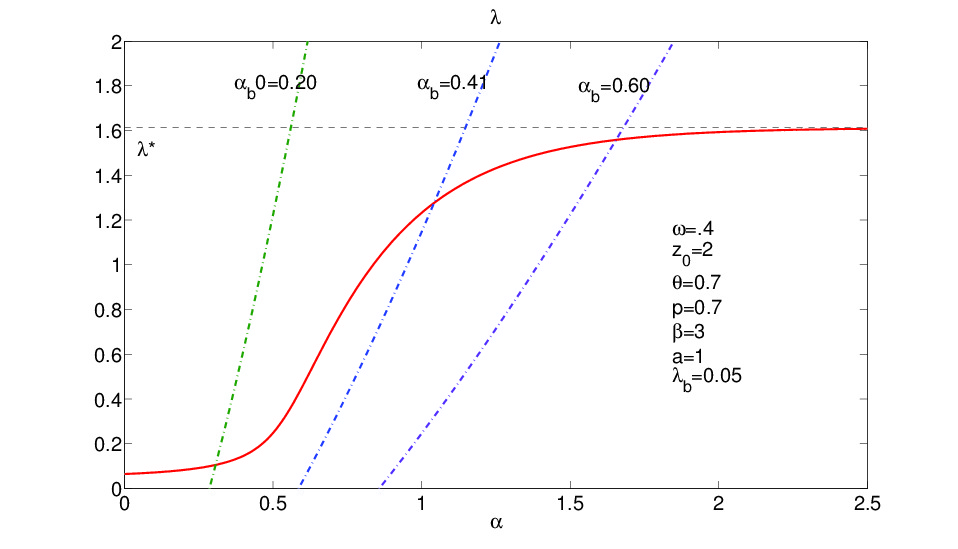}}
  \subfloat[Large $\beta$]{\label{fig:ncB_b}\includegraphics[width=0.45\textwidth]{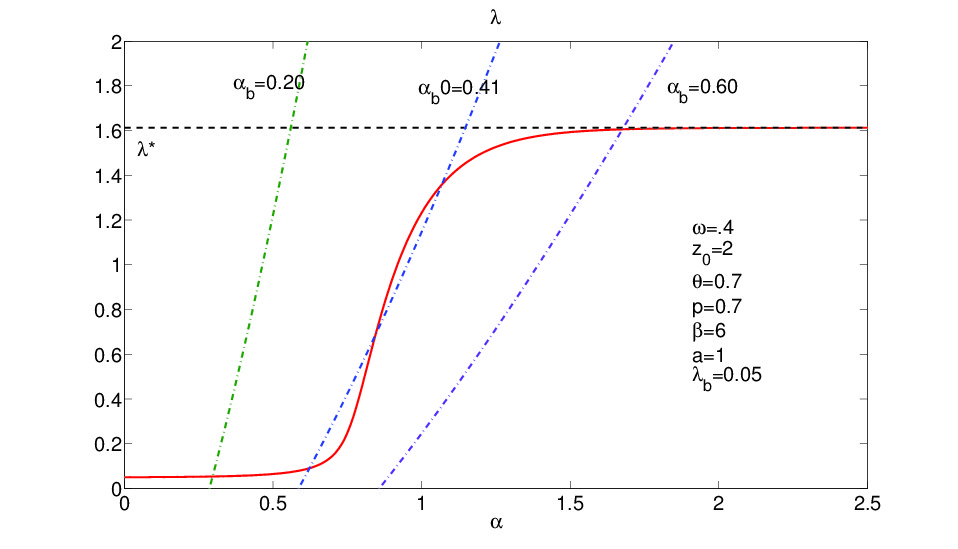}}
 \caption{Nullclines of the system given by \eqref{sys:homo_1bis} with $G(z)=z(z_0-z)$ and $\omega = .4, z_0 = 2,  a=1, \theta =.7, p = .7$, $\la_b=0.05$. Left, \ref{fig:ncB_a}, $\beta = 3$, and right, \ref{fig:ncB_b}, $\beta = 6$. For each case, three $\alpha$-nullclines are shown: $\theta \alpha_b =0.2, 0.41$ and $0.6$. %%% $\alpha_b=0.286, 0.586$ and $0.857$.
 For $\beta=6$, a discontinuous transition occurs. }
\label{fig:nullclines_B}
\end{figure}

\begin{figure}%[H]
  \center
  \subfloat[Moderate $\alpha_b$]{\label{fig:dynB_a}\includegraphics[width=0.45\textwidth]{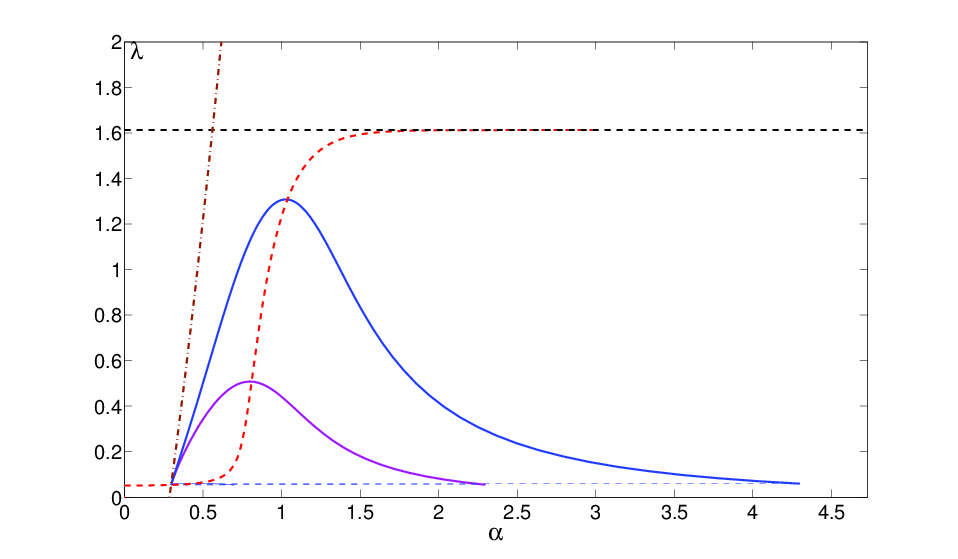}}\quad\quad
  \subfloat[Moderate $\alpha_b$, small shock, zoom]{\label{fig:dynB_b}\includegraphics[width=0.45\textwidth]
   {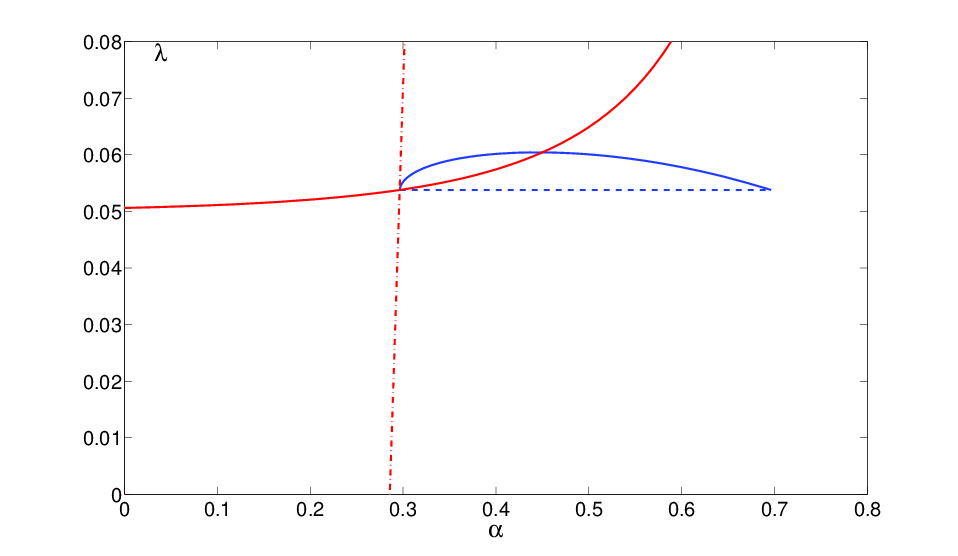}}\\
  \subfloat[Near the transition]{\label{fig:dynB_c}\includegraphics[width=0.45\textwidth]{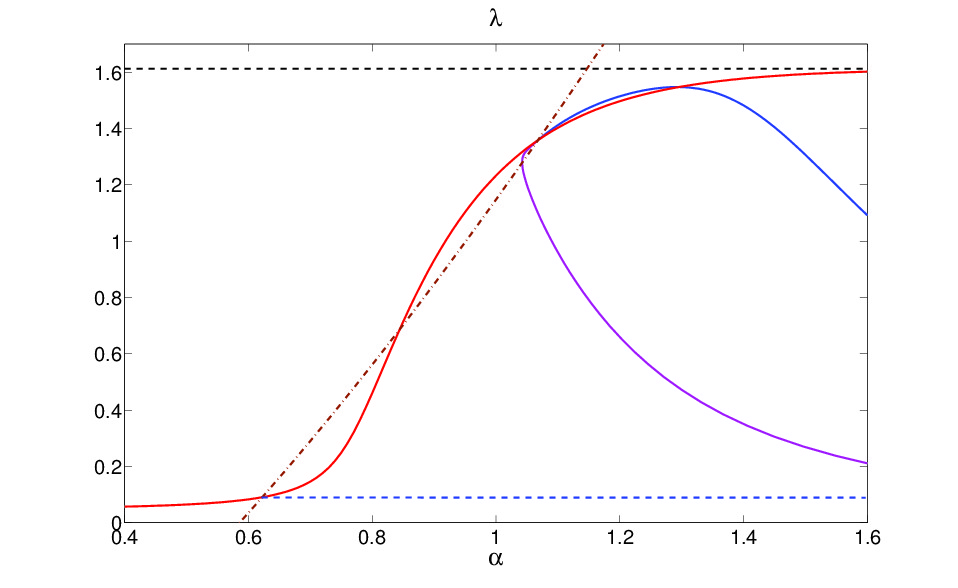}}
 \caption{Dynamics in the phase plane of the system given by \ref{sys:homo_1bis} with $G(z)=z(z_0-z)$ and $\omega = .4, z_0 = 2,  \beta=6, a=1, \theta =.7, p = .7$, $\la_b=0.05$. Top \ref{fig:dynB_a} and \ref{fig:dynB_b}, $\theta \alpha_b = 0.2$. Starting from the fixed point of the system without shocks, three trajectories are shown after stiulating the system with shocks of different amplitudes: $A=0.4, 2$ and $4$. For a small enough shock (\ref{fig:dynB_b}, $A=0.4$), the system remains where the $\la-$nullcline is almost flat, the rioting activity remains very small. Bottom, \ref{fig:dynB_c}, $\theta \alpha_b=0.41$.  In this case, there is two stable fixed points. Under a strong enough shock - here two examples, $A=2$ and $A=4$-, the systems, initially at the low fixed point, ends at the fixed point with a high $\la$ value.}
\label{fig:dynamics_B}
\end{figure}

This is a straightforward consequence of Proposition~\ref{prop:2}.
Figure \ref{fig:periodic} gives a numerical illustration in the case of a periodic excitation as above, starting from a positive value of $\lambda_0$. If the  stimulations are too weak or too
spread out (below some thresholds $A_c(T)$ or $1/T_c(A)$), the activity relaxes to null activity with damped oscillations. Above the threshold, the system converges to a limit cycle.
The proposition states that these limit cycles are in regions closer and closer to $\la=\la^*$ as $A$ gets larger. As shown on Figure \ref{fig:periodic}, this is the same at fixed $A$ when $T$ becomes small (high frequency).

We conjecture a similar result in the case of a  stochastic source term:
\be
S(t) = \sum_{i=0}^\infty A_i \delta_{t=t_i},
\ee
where each $\set{A_i,t_i}$ are specified by a compound Poisson process.
More precisely, the time between events, $t_{i+1}-t_i,$ are specified by $\set{N(t):t\ge 0},$ a Poisson process with a prescribed intensity $\la_P.$ Moreover, $\set{A_i: i\ge 1}$
are independent and identically distributed random variables with a prescribed distribution.  Finally, we assume that
$\set{A_i: i\ge 1}$ are independent of $\set{N(t):t\ge 0}.$

{\it Open questions.}  As $t\to\infty$, we conjecture that there is a limit distribution for $\lambda(t)$.
Now, assume that $\la_b =0$. Then, given $\la_P$ , when the $\mathbb{E}(A_i)$ are bounded and
$\mathbb{E}(A_i) \to 0$, we believe that this distribution converges in probability to 0, and that, when $\mathbb{E}(A_i) \to \infty$, this distribution converges weakly (or in probability) to the constant  $\lambda^*$. It seems to be an interesting question to understand whether there
 is actually a phase transition,  depending on $\mathbb{E}(A_i)$, from a long time limit of $\la_t$ going to zero in probability
to a long time limit of $\la_t$ approaching a non-zero distribution.
This is what we observe in the periodic deterministic case form numerical simulations.
In the random case, we note that whatever the value $\mathbb{E}(A_i)$ is,
we always have
\[
\liminf_{t\rightarrow \infty} \la_t = 0, \quad \limsup_{t\rightarrow \infty} \la_t= \la^{*}
\]
with probability one. Similar questions arise when $\mathbb{E}(A_i)>0$ is given and one examines the influence of the intensity  $\la_P$, which represents here a frequency. We can even assume that the $A_i$'s are constant.
At high frequency, we expect the system to be close to the maximum value most of the time, while for low frequency, we expect it to be close to 0. Whether there is or for what distributions there is a phase transition is an open question.

Some aspects of these questions are supported by numerical simulations. The dynamics with repeated shocks is illustrated on Figure \ref{fig:multirandomtimes} for shocks of constant amplitude $A_i=A$ but occurring at random times, with a mean frequency $\nu=\mathbb{E}(t_i)$. As one would expect, there is a critical value $\nu_c(A)$ such that for $\nu<\nu_c(A)$ the systems eventually relax, and for  $\nu>\nu_c(A)$ the system is in a sustained regime of bursts of excitations. It is observed that the activity is frequently close to the maximum $\la^*$, even at moderate rate. The fraction of time spent near $\la^*$ increases as the mean frequency increases.

\begin{figure}%[H]
  \center
  \subfloat[Low frequency, decaying rioting activity]{\label{fig:period_a}\includegraphics[width=0.5\textwidth]{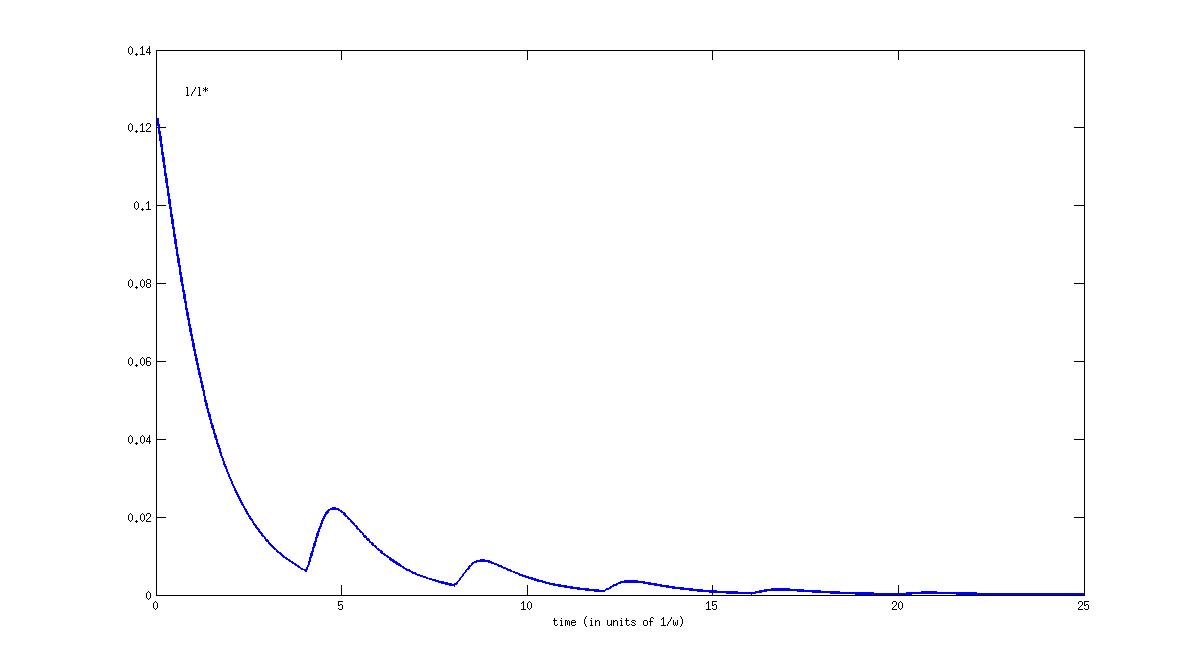}}
  \subfloat[Medium frequency, phase plane]{\label{fig:period_b}\includegraphics[width=0.5\textwidth]{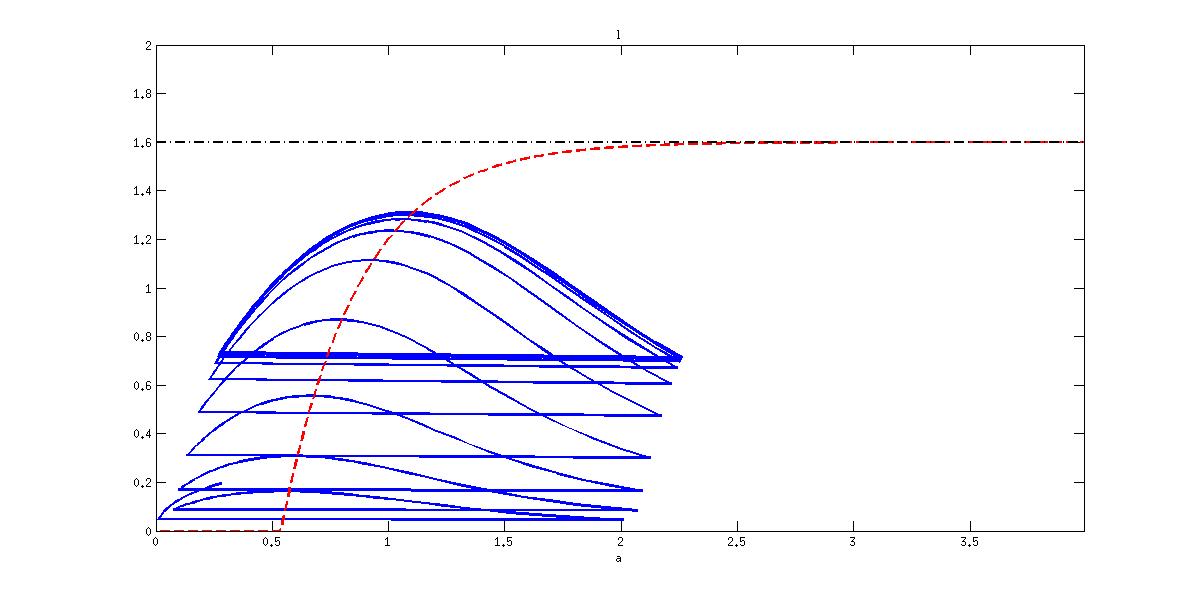}}\\
  \subfloat[High frequency]{\label{fig:period_c}\includegraphics[width=0.5\textwidth]{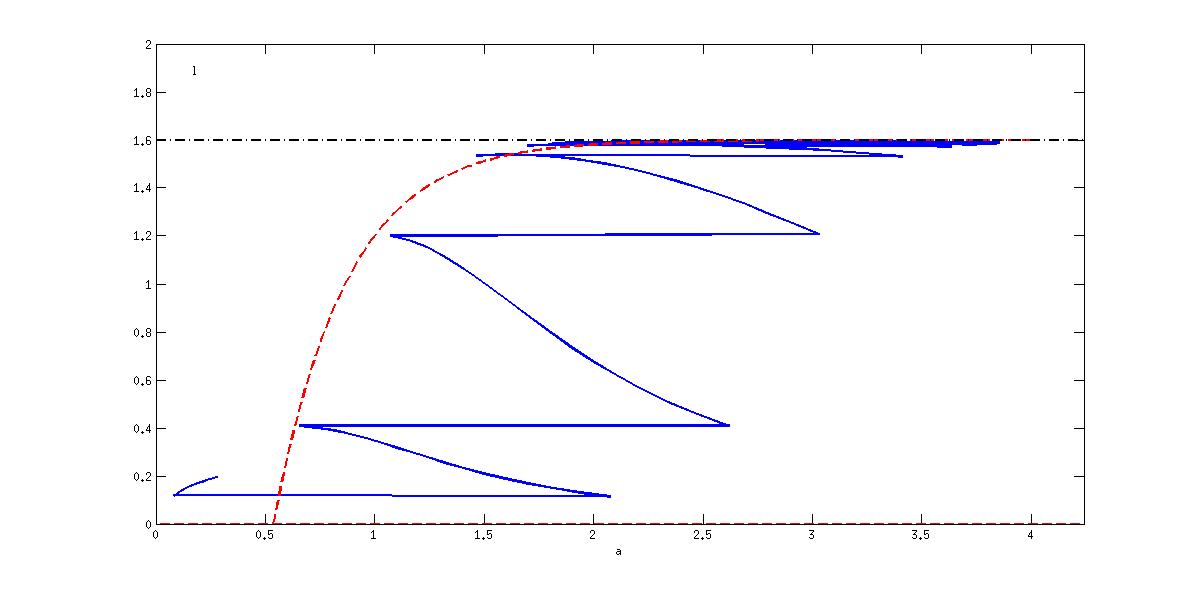}}
    \subfloat[High frequency, zoom]{\label{fig:period_d}\includegraphics[width=0.31\textwidth]{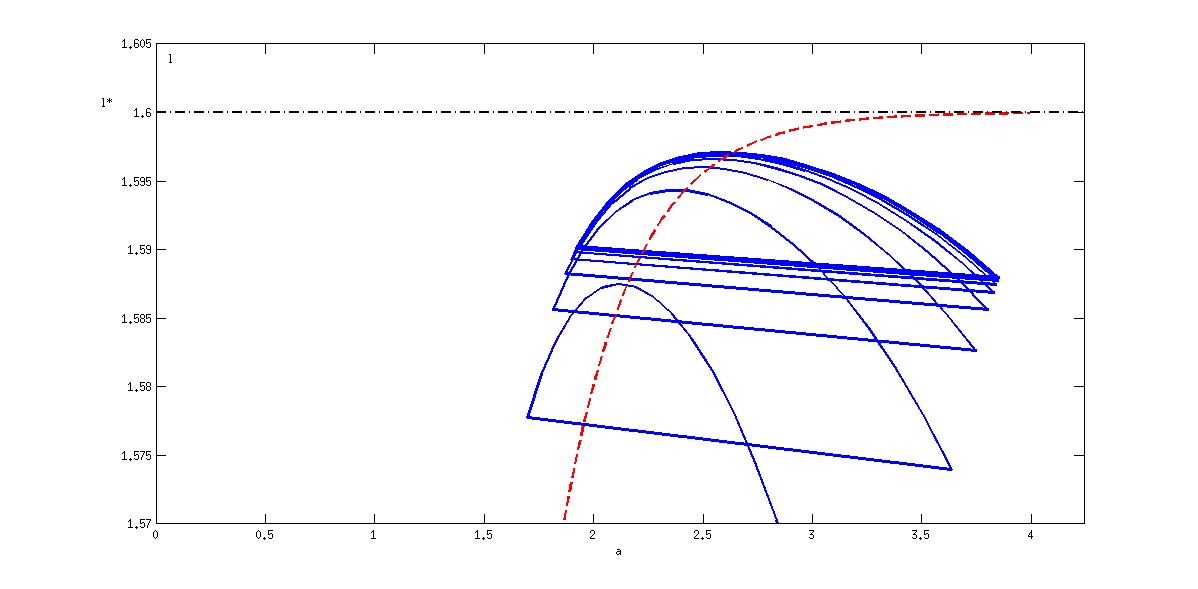}}
 \caption{Simulations of the system given by \eqref{sys:homo_1}
with $\omega = .4, z_0 = 2, \beta = 3, a=1, \theta=0.7, p = .7$ under periodic stimulations at different mean frequencies with common amplitude $A=2$.  }\label{fig:periodic}
\end{figure}
%%%%%%%%%%%%%%%%%%%%%%%%%%%%%%%%%%%%%%%%%%%%%%%%%%%%%%%%%%%%%%%%%%%%%%%%%%%%%%%%%%%%%%%%%%%%
\section{Numerical experiments on a network}
\label{sec:discrete-simul}
To investigate
the effects that spatial dispersal of information has on the spread of riots we illustrate and discuss some numerical realizations of the system
defined in \eqref{eq:la_nl}-\eqref{eq:al_nl} in this section.
We perform simulations on a network of one hundred urban centers that are on a square grid.
The square grid gives the geographic neighbors of each node.

 \begin{figure}%[H]
  \center
  \subfloat[Low frequency]{\label{fig:mult_a}\includegraphics[width=0.34\textwidth]{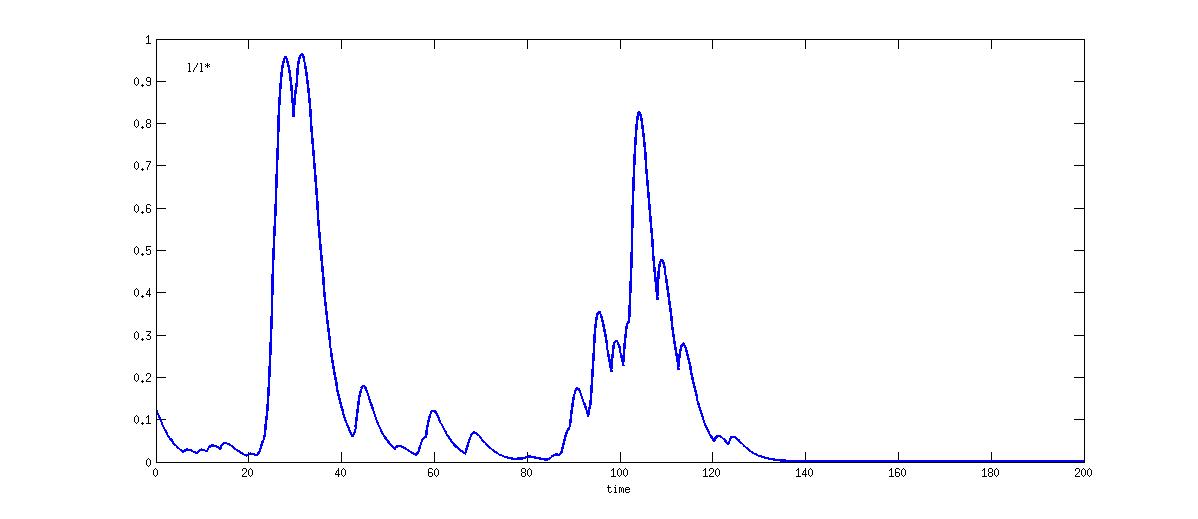}}
  \subfloat[Near the transition]{\label{fig:mult_b}\includegraphics[width=0.34\textwidth]{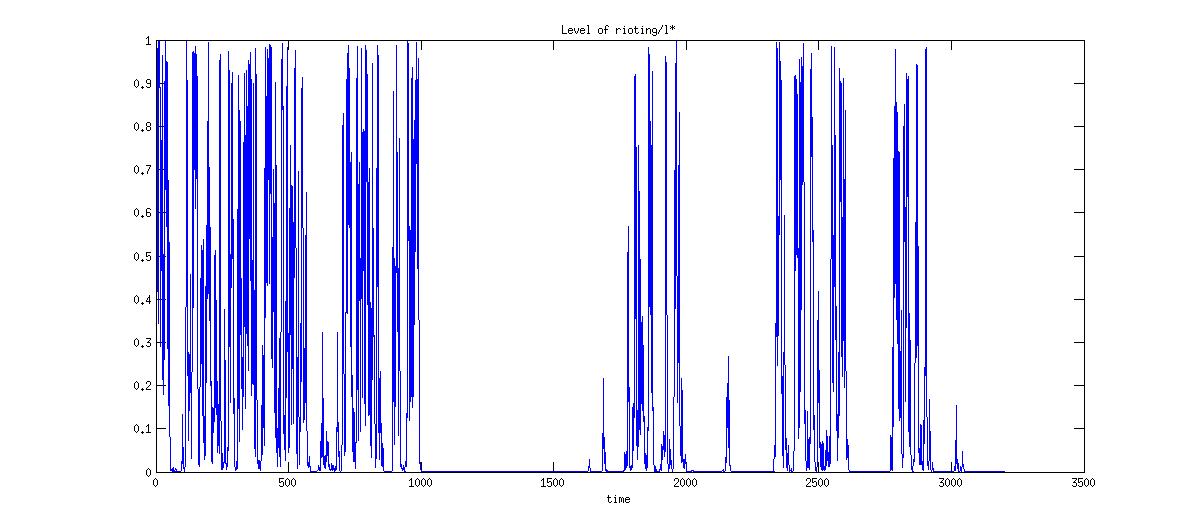}}
  \subfloat[High frequency]{\label{fig:mult_c}\includegraphics[width=0.34\textwidth]{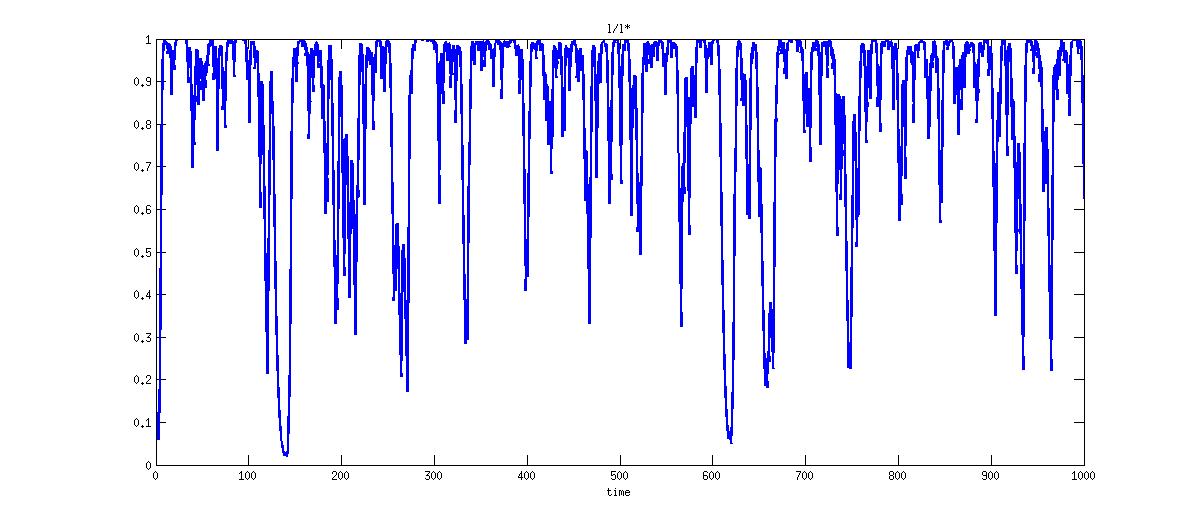}}
 \caption{Simulations of the system given by \eqref{sys:homo_1}
with $\omega = .4, z_0 = 2, \beta = 3, a=1, \theta=0.7, p = 0.7$ under multi shocks stimulations at different mean frequencies with common amplitude $A$. The rioting activity, $\la(t)/\la^*$, is shown as a function of time.   }\label{fig:multirandomtimes}
\end{figure}

\begin{enumerate}
\item {\it Double-threshold phenomena}: We study the case when one urban center is the only influence for all other centers.
For example, we can think of this center as being Paris, which undoubtedly has an influence on all of France.
In order to explore the effect that
the intensity of the triggering event has on the spread of the rioting activity, we perform a series of simulations with
fixed the parameters $\omega = .2,\theta =.3,z_0 = 10, \beta =1, a = 100, p = .7, \eta =.2$
and vary the strength of the triggering event $A$.  In this case, we observe a {\it double threshold
phenomenon}, which we summarize below:
\begin{itemize}
\item[(a)] For $A$ small (the simulation illustrates the case when $A = 2$) we observe that the rioting activity does not spread beyond the location of where
the triggering event occurs.  In fact, the intensity is not sufficiently high to provoke the spatial spread of rioting activity.
Refer to Figures \ref{fig:2-1a}-\ref{fig:2-1c} for three
different snapshots in time of this simulation.  Red corresponds to high levels of activity and dark blue to zero levels of rioting activity.
\item[(b)] For $A$ of intermediate value (the simulation illustrates the case when $A = 6$) we observe that the rioting activity spreads in a local fashion: the burst of rioting activity affects nearest neighbors.
It appears that the local diffusion of $\lambda(s,t)$ dominates in this regime, refer to Figures \ref{fig:2-2a}-\ref{fig:2-2c} for the corresponding snapshots in time of this simulation.
\item[(c)] For $A$ large (the simulations illustrate the case when $A=10$) the spread of rioting activity is non-local:  the bursts of social activity jump infecting urban areas in a non-local manner.
It appears that the non-local diffusion of $\alpha(s,t)$ dominates in this regime, refer to Figures \ref{fig:2-3a}-\ref{fig:2-3c}.
\end{itemize}

The apparent threshold between the very localized rioting activity and the activity which spreads is not surprising, but the second observed threshold
is.  Indeed,  from these numerical experiments we {\it conjecture} that there is a critical threshold value for the intensity, denote it by $A^*,$ such that if
$A<A^*$ then the rioting activity spreads locally (the geographic network dominates in this regime) and if $A>A^*$ the rioting activity spreads non-locally
(the social-network dominates in this regime).

\item {\it Delay effect}: Consider a social network made up of two influential urban centers,
for example, this could be Marseilles (M) and Paris (P).  We assume that the triggering event occurs at time $t=0$
in (P).  If the initial triggering event is not sufficiently strong, we observe a nominal spread of the rioting
activity and in fact the activity ceases relatively fast.  Refer to Figure \ref{fig:4a}-\ref{fig:4b} for an illustration.
On the contrary, when a second exogenous event (even of significantly less strength) occurs at the second urban center (M)
at a much later time ($t=30$ for our simulation), we observe a much stronger and faster spread of activity both from
(M) and (P).  After the second exogenous event one observes a local spread of rioting activity from both (M) and (P).
Refer to \ref{fig:5a}-\ref{fig:5b} for an illustration of this phenomena.  Of course,
this is due to the self-reinforcement between the two center (P) and (M), which leads to a delayed burst of activity.
\end{enumerate}

\section{Spatially continuous deterministic model}
\label{sec:continuous}
While we can learn much from the discrete system and its numerical simulations, it is useful to work with the continuum limit in order
to be able to perform more rigorous analysis.  Although we recognize that a discrete model is more realistic for this application,
the continuum models are useful in that they allow us to determine the intrinsic parameters of the system  as well as to obtain estimates
for the duration of an episode of rioting activity.  Thus, we are motivated to introduce a couple of continuum versions of our models in this section.

\subsection{Local influence on the the social tension}
\label{sec:cont-local}
First, we consider the case when the diffusion of both the level of rioting activity and of the social tension is only due to geographic connections.
This corresponds to the case when the matrix $C = V.$
Let $\Omega\subset \Real^2$ (or $\Real^1$ as a first step in exploring this phenomena) represent our domain of interest.
We are interested in the limit as the number of nodes approaches infinity ($N\rightarrow \infty$): $\la(x,t)$ and $\alpha(x,t)$ are then defined for all $x\in \Omega.$
We
discretize $\Omega$ and obtain a lattice and denote it by $\Omega_l:=\set{(x_i,y_j)}_{i,j=1}^{N},$ such that $\Delta x =x_{i+1} -x_{i}$ and $\Delta y = y_{i+1}-y_{i}.$
The nodes $(x_i,y_j)\in \Omega_l = \N$ correspond to the discrete locations where the rioting activity will take place, e.g. `urban clusters' in the model
introduced in \cite{Braha2012}.  We are interested in the limit of the distance between the urban clusters approaching zero, i.e. $\Delta x\rightarrow 0$.
For simplicity, we consider the case when the stochastic effects are negligible, which corresponds to the system given by
equations \eqref{eq:la_nl} and \eqref{eq:al_nl}.
As the derivation of the continuum limit as both $\Delta x$ and $\Delta t$ go to zero is standard, we only mention that one uses
the discrete Laplacian operator in order to keep track of mesh distance and take the limit in such a way that
$D:= \frac{(\Delta x)^2 \eta}{\Delta t}$ remains constant.  Then, the corresponding limiting equation are the following:
\begin{subequations}\label{sys:cont1}
\begin{align}
&\frac{d}{dt}\la(x,t)= D\Delta\la(x,t) +r (\alpha(s,t)) G(\la(x,t)) - \kappa \la(x,t),\\
&\frac{d}{dt}\alpha(x,t)= D\Delta \alpha(x,t) + A_i\sum_{i=1}^n \delta_{t=0,s=s_i} -  (h(\la(x,t)) - \eta) \alpha(x,t)+\theta \alpha_b,
\end{align}
\end{subequations}
where $\kappa = \omega -\eta.$  We always assume that $\kappa>0$.  The
existence of solutions to the Cauchy problem defined by \eqref{sys:cont1} falls under
classical theory.

\subsection{Single triggering event: Eventual decay of the rioting activity}
\label{sec:cont-decay}
Let us consider the system \eqref{sys:cont1} with the following initial conditions:
\begin{align}\label{def:ic}
\la(x,0)=\lambda_0\quad\text{and}\quad\alpha(x,0)=0.
\end{align}
We show that in the case when $\alpha_b = 0$ the bursts of social activity eventually cease though out the domain
$\Omega$.  That is, the we prove that the mass of the social tension and the level of rioting activity eventually approaches zero.

\begin{prop}[Decay of mass]
Let $\la(x,t)$ and $\alpha(x,t)$ be solutions to \eqref{sys:cont1} with initial conditions given by \eqref{def:ic} for parameters such that
\[
\theta/(1+\lambda^*)^p>\eta.
\]
There exists $k_1,k_2>0$ such that
\begin{align}
&\norm{\alpha(0,x)}_1e^{-k_1 t}+A_i\sum_{i=1}^ne^{-k_1(t-t_i)}\nonumber\\ \le &\norm{\alpha(t,x)}_1 \le \norm{\alpha(0,x)}_1 e^{-k_2 t}+A_i\sum_{i=1}^ne^{-k_2(t-t_i)}.
\end{align}
Furthermore, for any $\epsilon>0$ there exists a $T_\epsilon>0$ such that $\int_\Omega\la(t)<\epsilon$ for all $t>T_\epsilon$.
\end{prop}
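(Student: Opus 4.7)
The plan is to proceed in three phases: first I establish pointwise bounds $0 \le \lambda \le \lambda^*$ and $\alpha \ge 0$ by comparison/maximum-principle arguments; next I integrate the $\alpha$ equation in space to reduce the first claim to a scalar ODE inequality for $M(t) := \|\alpha(t,\cdot)\|_1$; finally I transfer the $L^1$ decay of $\alpha$ into eventual smallness of $\int_\Omega \lambda$. For Phase~1, the inequalities $0 \le \lambda(x,t) \le \lambda^*$ follow by comparing the $\lambda$ equation with the constants $0$ and $\lambda^*$, using $G(0)=0$, $r\le 1$, and the fact that $\lambda^*$ is a super-equilibrium of the underlying reduced reaction (a consequence of the standing hypothesis $G'(0)>\omega>\kappa$); nonnegativity of $\alpha$ follows from $\alpha(x,0)=0$, $\alpha_b=0$, the nonnegativity of the shocks, and the maximum principle. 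These pointwise controls translate into the key bound $\theta/(1+\lambda^*)^p \le h(\lambda(x,t)) \le h(0) = \theta$, which feeds Phase~2.

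For Phase~2, I assume boundary/decay conditions on $\Omega$ under which $\int_\Omega D\Delta\alpha\, dx = 0$ (Neumann, periodic, or sufficient decay at infinity). Integrating the $\alpha$ equation in space then eliminates both the diffusion and (since $\alpha_b = 0$) the base-tension source, leaving on each inter-shock interval
\[
M'(t) = -\int_\Omega (h(\lambda) - \eta)\,\alpha\, dx,
\]
together with jumps $M(t_i^+) = M(t_i^-) + A_i$. Inserting the bounds from Phase~1 gives
\[
-k_1 M(t) \le M'(t) \le -k_2 M(t), \qquad k_1 := \theta - \eta,\ k_2 := \theta/(1+\lambda^*)^p - \eta > 0,
\]
and iterating Grönwall--Duhamel across the shocks produces the claimed two-sided bound.

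For Phase~3, after the last shock time $t_n$ parabolic smoothing (Gaussian heat-kernel estimates) promotes the $L^1$ decay of $\alpha$ from Phase~2 to $L^\infty$ decay, so $\|r(\alpha(t,\cdot))\|_\infty \to r(0)$ as $t \to \infty$. Integrating the $\lambda$ equation over $\Omega$ and using the KPP majorization $G(\lambda) \le G'(0)\lambda$ yields
\[
\frac{d}{dt}\int_\Omega \lambda \le \bigl(G'(0)\,\|r(\alpha(t,\cdot))\|_\infty - \kappa\bigr)\int_\Omega \lambda,
\]
and this prefactor is eventually negative provided $r(0)G'(0) < \kappa$, from which the stated eventual smallness follows. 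The main obstacle is precisely this last condition: the single-site hypothesis $r(0)G'(0) < \omega$ used in Proposition~\ref{prop:1} is slightly weaker than $r(0)G'(0) < \kappa = \omega - \eta$, so one must either impose an additional smallness condition on $\eta$, or run a Lyapunov/comparison argument reducing the PDE dynamics to the spatially homogeneous ODE of Proposition~\ref{prop:1}, which already yields decay under the weaker hypothesis. Phase~2 is routine apart from the care needed to handle the distributional shock sources, which is done by working on inter-shock intervals and patching by the $A_i$ jumps.
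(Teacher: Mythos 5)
Your proof follows the paper's route essentially verbatim: integrate the $\alpha$-equation in space so that the diffusion term drops out, sandwich $h(\lambda)$ between $\theta/(1+\lambda^*)^p$ and $\theta$ via pointwise bounds on $\lambda$, run a two-sided Gr\"onwall argument with jumps $A_i$ at the shock times, and then feed the resulting decay into the integrated $\lambda$-equation. The only genuine divergence is in the last step: the paper bounds $\int_\Omega r(\alpha)G(\lambda)\,dx$ by $r(g(t))\,|\Omega|\,\lambda^*$, where $g(t)$ is the decaying upper bound on $\|\alpha(t,\cdot)\|_1$, and then asserts $r(g(t))\to 0$; you instead use $G(\lambda)\le G'(0)\lambda$ together with $L^\infty$ decay of $\alpha$ obtained by parabolic smoothing. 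Your closing caveat is well taken and in fact applies to the paper's own argument: since $r$ is a sigmoid, $r(0)=1/(1+e^{\beta a})>0$, so the Duhamel term $\int_0^t r(g(s))e^{-\kappa(t-s)}\,ds$ does not vanish as $t\to\infty$ without a smallness condition of the type $G'(0)r(0)<\kappa=\omega-\eta$ (slightly stronger than the single-site hypothesis $G'(0)r(0)<\omega$); making this explicit, as you do, is an improvement on the printed proof. One shared imprecision to watch: because the PDE has linear decay rate $\kappa=\omega-\eta<\omega$, the constant $\lambda^*$ (the nonzero root of $G(z)=\omega z$) is a subsolution rather than a supersolution of the $\lambda$-equation, so the a priori upper bound on $\lambda$ should be the root of $G(z)=\kappa z$ (or simply $z_0$), and the constant appearing in the hypothesis $\theta/(1+\lambda^*)^p>\eta$ and in $k_2$ should be adjusted accordingly.
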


\begin{proof}
Let us first compute the dynamics of the mass of $\alpha,$
\begin{align*}
 \frac{d}{dt}\int_\Omega \alpha(x,t)\;dx = A_i\sum_{i=1}^n \int_\Omega\delta_{t=0,x=x_i}\;dx -  (h(\la(x,t)) - \eta) \int_\Omega\alpha(x,t)\;dx.
\end{align*}
For simplicity let us denote $y(t) = \int_\Omega\alpha(x,t)\;dx$  and $k_2 = \theta/(1+\lambda^*)^p-\eta$ and $k_1 = \theta-\eta.$
Then we obtain the following upper and lower bounds:
\begin{align*}
A_i\sum_{i=1}^n \int_\Omega\delta_{t=0,x=x_i}\;dx -  k_1 y(t)
\leq  \frac{d}{dt}y(t) \le A_i\sum_{i=1}^n \int_\Omega\delta_{t=0,x=x_i}\;dx -  k_2 y(t).
\end{align*}
This gives the upper bound on the mass:
\begin{align*}
y(t) \le &\frac{1}{e^{k_2t}} \int_0^t e^{k_2s}A_i \sum_{i=1}^n\delta_{s-t_i}\;ds\\
\leq &\norm{\alpha(0,x)}_1e^{-k_2t}+A_i\sum_{i=1}^ne^{-k_2(t-t_i)}.
\end{align*}
Similarly, we obtain that
\begin{align*}
y(t) \ge \norm{\alpha(0,x)}_1e^{-k_1 t}+A_i\sum_{i=1}^n e^{-k_1(t-t_i)}.
\end{align*}
Now, let us consider the dynamics of the mass of $\lambda:$
\begin{align*}
\frac{d}{dt}\int_\Omega\la(x,t)\;dx&= \int_\Omega r (\alpha)G(\la)\;dx - \kappa \int_\Omega\la(x,t)\;dx\\
& \le r (g(t))\abs{\Omega}\la^* - \kappa \int_\Omega\la(x,t)\;dx,
\end{align*}
where,
\[
g(t) = \norm{\alpha(0,x)}_1e^{-k_2t}+A_i\sum_{i=1}^n e^{-k_2(t-t_i)}.
\]
From this, we obtain the estimate
\[
\norm{\la(x,t)}_1 = \norm{\la(x,0)}_1e^{-\kappa t} +\tilde c \int_0^tr(g(s))e^{-\kappa s}\;ds.
\]
Note that $\lim_{x\rightarrow \infty} g(t)=0$ and this implies that $\lim_{t\rightarrow \infty}r(g(t))=0$. Thus, for any
$\epsilon>0$
there exists a time $T^*=T^*(t_i,A_i,k_2,a,\beta,\epsilon)$ such that for all $t>T^*$ $\la(t,x)<\epsilon$.
\end{proof}

\subsection{Non-local influence on the tension}
\label{sec:cont-nonlocal}
Similarly, we can obtain a continuum model for the case when the social tension diffuses
non-locally through the communications network $C$.  In this case, we obtain a system with
an integral operator.
\begin{subequations}\label{sys:nl}
\begin{align}
& \la_t = D\Delta \la - (\omega-\eta) \la +r(\alpha)G(\la)+\omega \la_b\\
& \alpha_t =\frac{\bar\eta}{\int_\Omega\J(\cdot,y)\;dy} \int_\Omega\J(x,y)\alpha(y,t)\;dy - h(\la)\alpha  +\theta \alpha_b+A\delta_{0,\bar s}.
\end{align}
\end{subequations}

The interaction potential $\J(x,y)$ is either equal to zero or one, which is due to the choice of the social network (see the
definition of the matrix C in section \ref{sec:discrete-model})
This, of course, can be generalized to include different weights.  As an interpretation of $\J(x,y)$
we can think that each location, $x\in \Omega$, has a
{\it domain of influence}, which is the set of locations which influence what happens in $x$.
At the same time, each location influences some locations and not others, we refer to this as the {\it range of influence} of location $x$.
Observe that the integral operator does not model diffusion and does not have to be symmetric.

%%%%%%%%%%%%%%%%%%%%%%%%%%%%%%%%%%%%%%%%%%%%%%%%%%%%%%%%%%%%%%%%%%%%%%%%%%%%%%%%
\section{Traveling waves solutions}
\label{sec:waves}
One of the most important characteristics about the spread of rioting activity is the speed 
and the exact manner in which it spread.  In fact, 
one can think of the spread of rioting activity as a {\it front of high levels activity} that is 
invading regions with the base level of activity.  It is therefore natural to first look for the existence of traveling wave solutions.   

\subsection{Local influence}
For simplicity, let us consider the model with local diffusion for both variables and with only one exogenous event.  In fact, the shock
 can be included in the initial condition.  For this purpose, consider the system: 
\begin{subequations}\label{sys:ibvp_tw}
\begin{align}
&\frac{d}{dt}\la(x,t)= \Delta\la(x,t) +\Phi(\alpha(x,t),\lambda(x,t)),\\
&\frac{d}{dt}\alpha(x,t)= \Delta \alpha(x,t)+\Psi(\alpha(x,t),\lambda(x,t)),\\
&\la(x,0)=\lambda_0 \quad\text{and}\quad\alpha(x,0)=A_i\delta_{x=\bar x},
\end{align}
\end{subequations}
where $\Phi(\alpha,\lambda) : =r (\alpha) G(\la) - \kappa \la$ and $\Psi(\alpha,\lambda) :=-  h(\la)\alpha + \eta \alpha+\theta \alpha_b$.
Solving $d\alpha/dt = 0$ gives the relationship:
\begin{align}\label{eq:alss}
\alpha(\la) = \frac{\theta \alpha_b}{h(\la)-\eta}.
\end{align}

\begin{figure}[H] 
  \center
  \subfloat[$A=2,\;t\sim 4.3$]{\label{fig:2-1a}\includegraphics[width=0.31\textwidth]{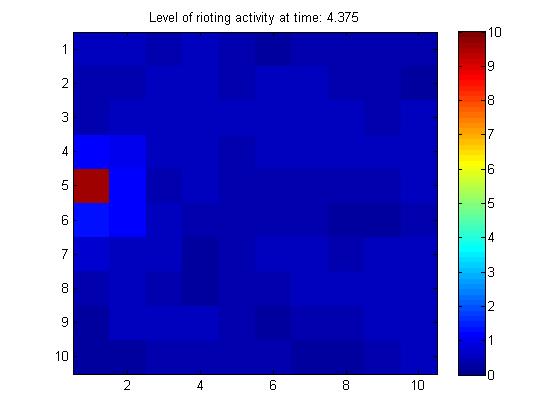}}\quad
  \subfloat[$A=2,\;t\sim 25$]{\label{fig:2-1b}\includegraphics[width=0.31\textwidth]{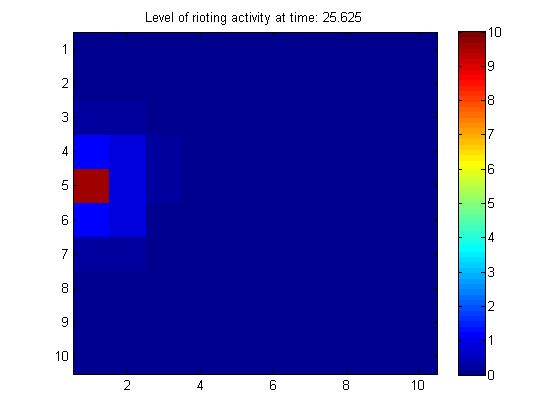}}
  \subfloat[$A=2,\;t= 50$]{\label{fig:2-1c}\includegraphics[width=0.31\textwidth]{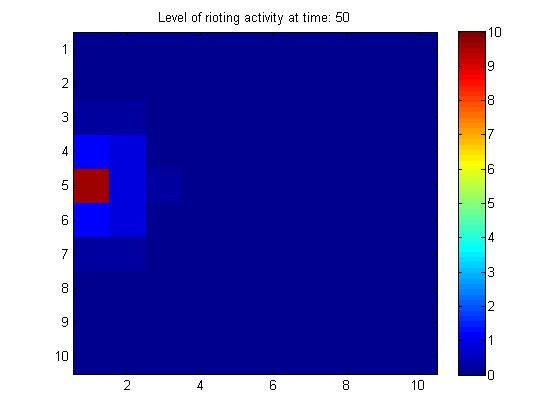}}\;
  \subfloat[$A=6,\;t\sim 10$]{\label{fig:2-2a}\includegraphics[width=0.31\textwidth]{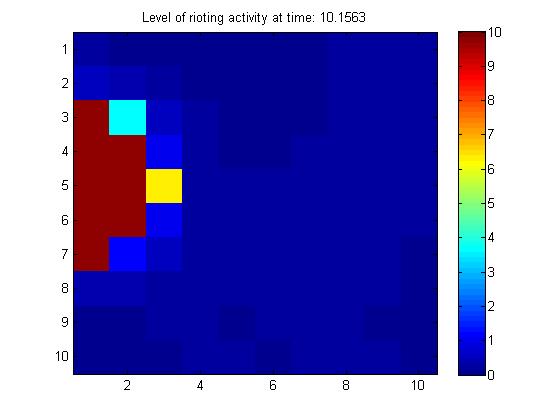}}\quad
  \subfloat[$A=6,\;t\sim 15$]{\label{fig:2-2b}\includegraphics[width=0.31\textwidth]{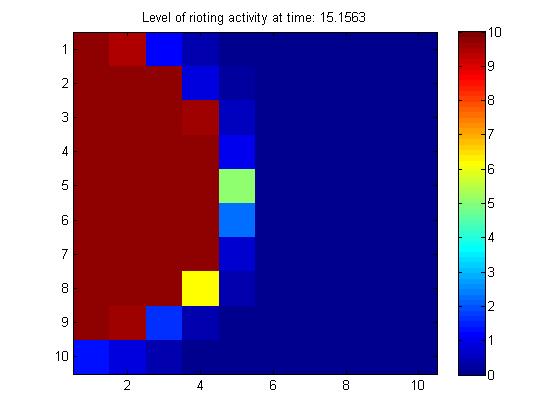}}
  \subfloat[$A=6,\;t= 50$]{\label{fig:2-2c}\includegraphics[width=0.31\textwidth]{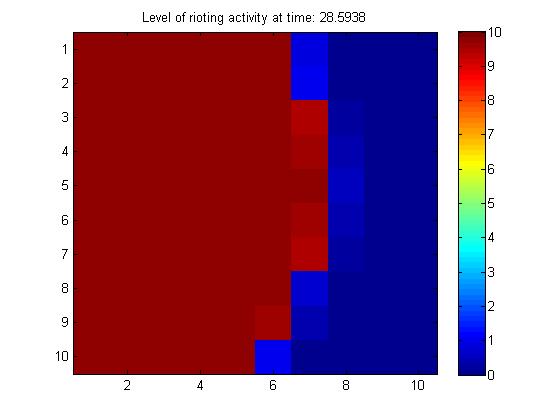}}\;
  \subfloat[$A=10,\;t\sim 4.3$]{\label{fig:2-3a}\includegraphics[width=0.31\textwidth]{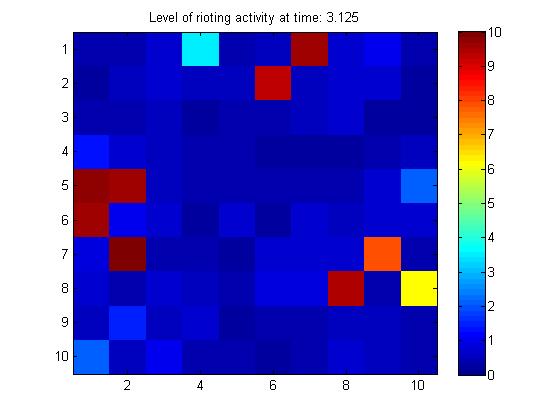}}\quad
  \subfloat[$A=10,\;t\sim 25$]{\label{fig:2-3b}\includegraphics[width=0.31\textwidth]{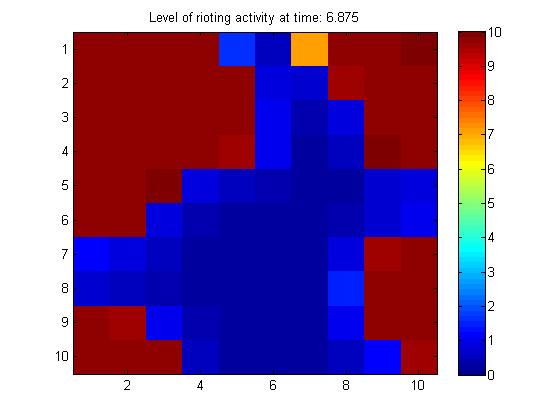}}
  \subfloat[$A=10,\;t= 50$]{\label{fig:2-3c}\includegraphics[width=0.31\textwidth]{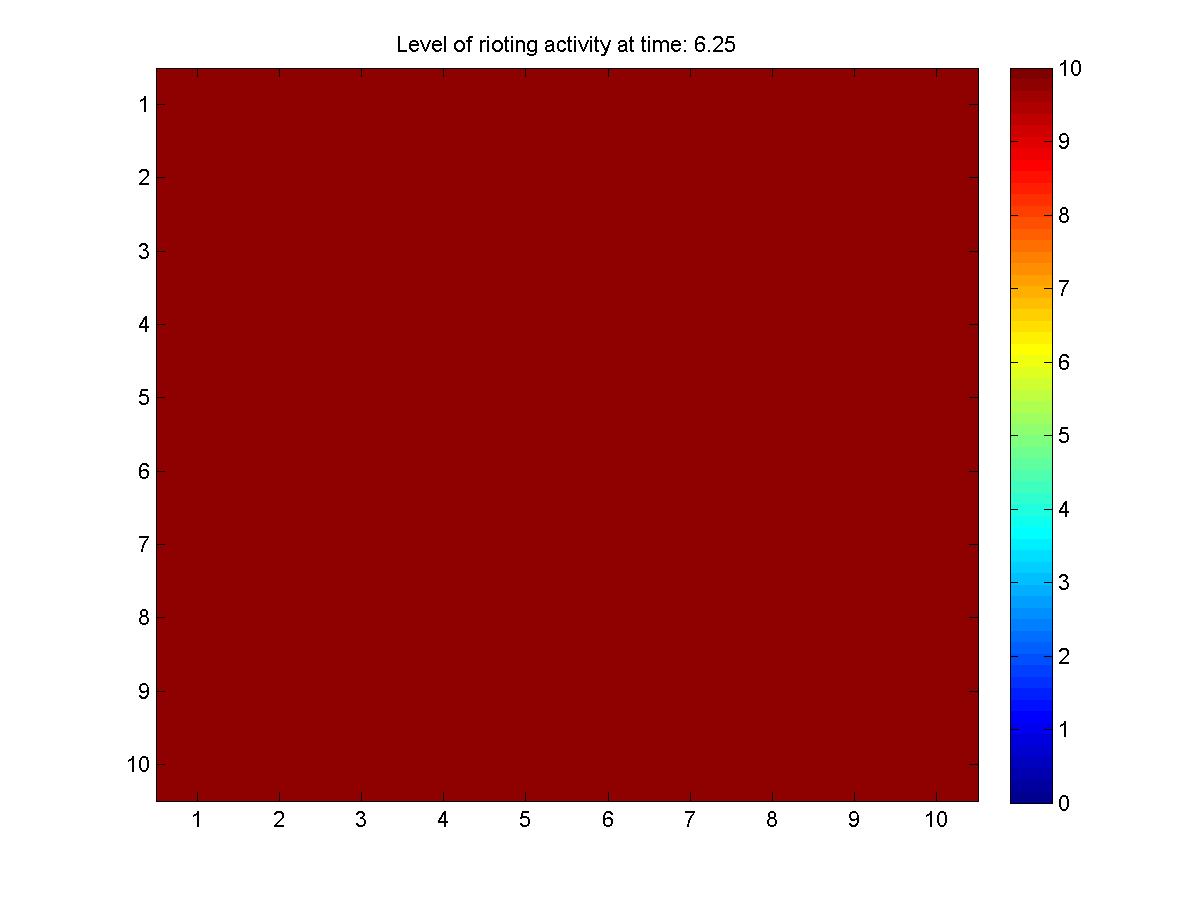}}\;
 \caption{Simulations of the the network-dependent model (system given by \eqref{eq:la_nl}-\eqref{eq:al_nl})
with $\omega = .2,\theta =.3,z_0 = 10, a = 100, p = .7, \eta =.2$ and varying intensity of the triggering event. Figures \ref{fig:2-1a}-\ref{fig:2-1c} 
illustrate a simulation with a triggering event occurring at time zero with intensity $A=2.$  On the other hand, Figures \ref{fig:2-2a}-\ref{fig:2-2c}
illustrate a simulation where the triggering event has intensity $A=4.$ Figures \ref{fig:2-3a}-\ref{fig:2-3c} illustrate a simulation with an triggering event of intensity $A=10$.}
\label{fig:2}
\end{figure}

\begin{figure}[H] 
  \center
  \subfloat[$t =25$]{\label{fig:4a}\includegraphics[width=0.4\textwidth]{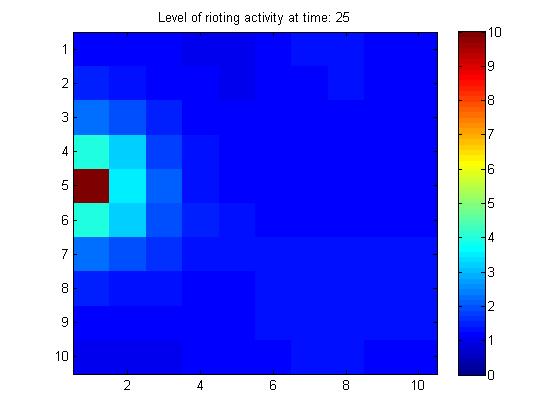}}
  \subfloat[$t =50$.]{\label{fig:4b}\includegraphics[width=0.4\textwidth]{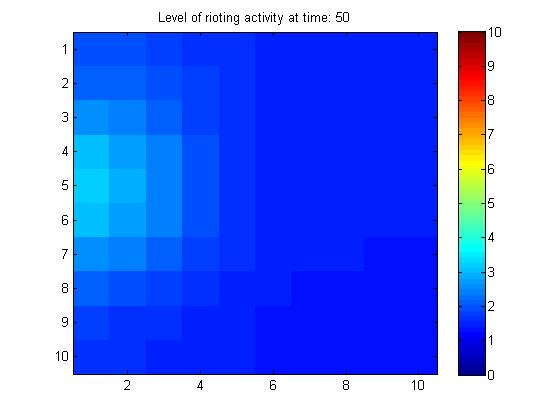}}\\
  \subfloat[$t =19$.]{\label{fig:5a}\includegraphics[width=0.4\textwidth]{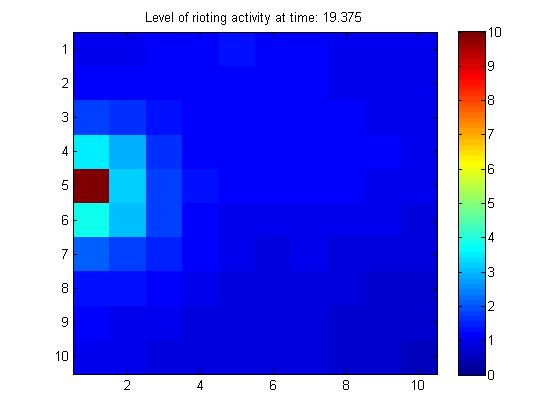}}
  \subfloat[$t =45$.]{\label{fig:5b}\includegraphics[width=0.4\textwidth]{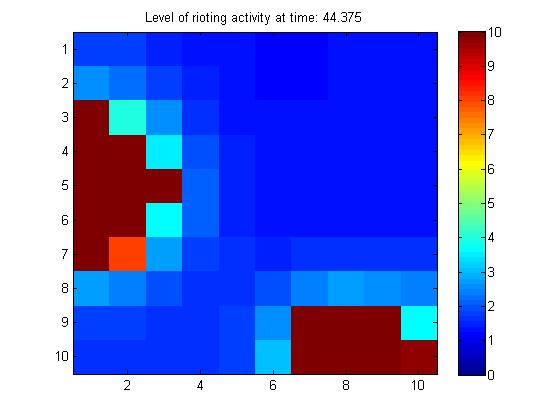}}
 \caption{Simulation with the social network with two influential nodes. Figures
\ref{fig:4a}-\ref{fig:4b} illustrate a single exogenous event with intensity $A =5.$  Figures \ref{fig:5a}-\ref{fig:5b}
illustrate the effect of a strong triggering event in (P) (red square in Fig \ref{fig:5a}) and a much waker second event $A=2$ 
at $t=30$, which leads to a delay spread in the rioting activity in (P).}\label{fig:3}
\end{figure}

Substituting \eqref{eq:alss} into the equation $\Phi(\alpha,\la)=0$ then gives two steady state solutions: one corresponding to a {\it non-excited state},
$(\alpha_1,\la_1) : =\left( \frac{\theta \alpha_b}{\theta-\eta},0\right),$ and one corresponding to an {\it excited state} $(\alpha_2,\la_2):=\left(\frac{\theta \alpha_b}{h(\la_2)-\eta},\la_2\right),$
where $\la_2=\la_2(G,r).$  The excited state will always be stable for our choice of functions $r(z)$ and $G(z).$  However, if
\begin{align}\label{cond:1}
r(\alpha_1)G'(\la_1) +\eta > h(\la_1)+k,
\end{align}
then the non-excited state will be unstable.  In fact, as the {\it critical tension} decreases (with all other parameters fixed) the system defined  by \eqref{sys:ibvp_tw} goes from a
regime where the two constant steady-states are stable into a regime where the non-excited steady-state becomes unstable.  Refer to Figure \ref{fig:6} for an illustration of the reaction terms 
$\Phi(\alpha(\lambda),\lambda)$ in the bistable regime (Figure \ref{fig:bis}) and monostable regime (Figure \ref{fig:mon}). 

In fact, for a certain parameter regime there exist traveling wave solutions, $(\psi,\phi,c)$ with $c\in \Real^+$, such that for all $z=x-ct$ the following holds:
\begin{align}\label{sys:traveling_wave}
\left\{\begin{array}{l}
\psi''(z) + c\psi'(z) +\Phi(\phi,\psi)=0, \\
c\phi'(z) + c\phi(z)+ \Psi(\phi,\psi) = 0,\\
0\leq (\psi(z),\phi(z))\leq (\alpha_2,\la_2),\\
\psi(-\infty) = \alpha_2,\;\phi(-\infty) = \la_2,\;\psi(+\infty) = \alpha_1,\;\phi(+\infty) = 0.
\end{array}\right.
\end{align}
The interesting observation is that the {\it critical tension} parameter $a$ can determine if traveling wave solutions exist
and whether they are unique.

\begin{figure}[H] 
\center
\subfloat[Bistable system: $a=5$]{\label{fig:bis}\includegraphics[width=0.45\textwidth]{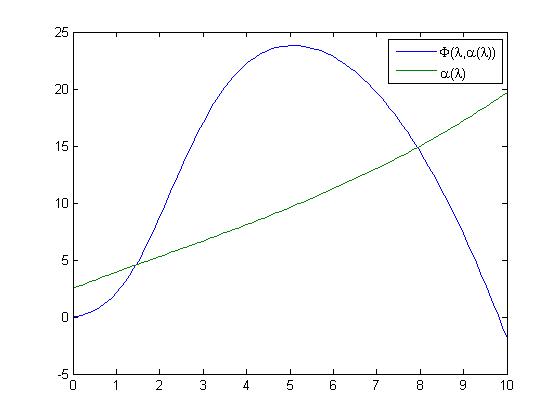}}\quad\quad
\subfloat[Monostable system: $a=1$]{\label{fig:mon}\includegraphics[width=0.45\textwidth]{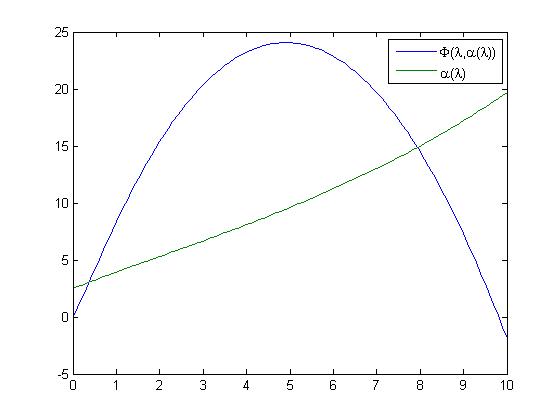}}
\caption{Illustration of the $\Phi(\alpha(\lambda),\lambda)$ with parameters: $z_0 = 10, \omega = .2, \theta=0.05,
\eta = .01, p=.5.$ }\label{fig:6}
\end{figure}

\begin{thm}[Critical threshold for traveling wave solutions]\label{thm:traveling_wave}
Let the $\kappa, \theta, p, \eta, a, k$ be chosen so that \eqref{sys:ibvp_tw} has two steady-state solutions $(\alpha_1,0)$ and
$(\alpha_2, \lambda_2)$ and such that \eqref{eq:alss} remains positive for all $\la\in [0,\la_2]$.  
There exists $0<a_*<\infty$ so that:
\begin{enumerate}
\item if $a_*<a$ there exists a unique $(\psi(z),\phi(z),c^*)$ with $c^*\in \Real$, up to translations, to \eqref{sys:traveling_wave}, 
Furthermore, $c^*>0.$
\item For $0<a<a_*$ there exists a $c^*\in \Real^+$ and
solutions $(\psi_c(z),\phi_c(z))$ with $z=x-ct$ to \eqref{sys:traveling_wave} for any $c\geq c^*$. 
Furthermore, when $c<c^*$, such waves do not exist.
\end{enumerate}
In either case, it holds that
$\psi'(z)< 0$ and $\phi'(z)<0$ for all $\abs{z}<\infty$.  
\end{thm}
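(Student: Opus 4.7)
The plan is to exploit the cooperative (order-preserving) structure of the reaction terms $\Phi$ and $\Psi$. In the rectangle $R = [0,\alpha_2]\times[0,\lambda_2]$ we have $\partial_\alpha \Phi = r'(\alpha)G(\lambda) \geq 0$ since $r$ is a sigmoid and $G \geq 0$ on $[0,z_0]$, and $\partial_\lambda \Psi = -h'(\lambda)\alpha \geq 0$ since $h$ is decreasing and $\alpha \geq 0$. Thus the system is monotone in $R$, and a boundary-flux check shows $R$ is invariant for the reaction--diffusion flow. I would identify $a_*$ via the spectral criterion at the non-excited equilibrium: condition \eqref{cond:1} is exactly the condition that the Jacobian of $(\Phi,\Psi)$ at $(\alpha_1,0)$ has a positive eigenvalue. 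Since $r(\alpha_1)$ increases monotonically toward $1$ as $a$ decreases, this eigenvalue crosses zero at a unique value $a = a_* \in (0,\infty)$. For $a > a_*$ the system is bistable (both endpoints linearly stable), and for $0 < a < a_*$ it is monostable (the non-excited state unstable).

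In the bistable regime ($a > a_*$), I would invoke the Volpert--Volpert--Volpert theorem for monotone bistable systems: assuming there are no intermediate equilibria in $R$ comparable to both endpoints, there exists a unique (up to translation) monotone traveling wave $(\psi,\phi)$ connecting $(\alpha_2,\lambda_2)$ at $-\infty$ to $(\alpha_1,0)$ at $+\infty$, with a uniquely determined speed $c^*$. The monotonicity $\psi' < 0,\phi' < 0$ then follows from the sliding method of Berestycki--Nirenberg adapted to cooperative systems. To obtain $c^* > 0$, I would multiply the profile equations by suitable positive weights $(p,q)$ and integrate over $\Real$; using the boundary conditions, the resulting identity expresses $c^*$ as an integral of a weighted combination of $\Phi$ and $\Psi$ whose sign, in our parameter regime (where the excited state is energetically dominant), is positive.

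In the monostable regime ($0 < a < a_*$), I would follow a KPP-type strategy. Linearizing at the unstable state $(\alpha_1,0)$, the minimum wave speed $c^*$ is obtained from a dispersion relation for the Jacobian $J$ of $(\Phi,\Psi)$ at $(\alpha_1,0)$: explicitly, $c^*$ is the infimum of $c > 0$ for which the characteristic equation $\det(\mu^2 I + c\mu I + J) = 0$ admits a real solution $\mu < 0$ with a componentwise nonnegative eigenvector. Existence of monotone waves for every $c \geq c^*$ would then follow from a sub- and super-solution construction on truncated intervals combined with monotone iteration (using cooperativity) and a compactness argument via Arzel\`a--Ascoli. Non-existence for $c < c^*$ is standard: a purported profile would have to approach $(\alpha_1,0)$ with oscillatory complex-exponential rates, contradicting $\psi,\phi \geq 0$.

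The main obstacle I expect lies in the bistable case: verifying the hypotheses of the Volpert--Volpert--Volpert theorem, particularly ruling out intermediate equilibria in $R$ comparable to both endpoints, and then pinning down the sign of $c^*$. The sign is a global (non-perturbative) property depending on a balance between the two nonlinearities and does not follow from local linear analysis; establishing it requires a careful energy-type argument tailored to the specific structure of $\Phi$ and $\Psi$. A secondary difficulty is the slight failure of strict cooperativity on the boundary $\lambda = 0$ (where $\partial_\lambda \Psi$ vanishes), which must be shown not to spoil the uniqueness statement.
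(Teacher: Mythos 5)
Your proposal follows essentially the same route as the paper: the paper's proof is a one-paragraph citation of Theorems 2.1 and 2.2 of Volpert--Volpert--Volpert for monotone bistable and monostable systems, with the only substantive observation being the existence of a critical value $a_*$ separating the bistable regime ($a>a_*$, unique wave) from the monostable regime ($a<a_*$, half-line of speeds). Your explicit verification of cooperativity, the spectral identification of $a_*$ via condition \eqref{cond:1}, and the difficulties you flag (the sign of $c^*$, ruling out intermediate equilibria, and the degeneracy of strict monotonicity at $\lambda=0$) are all points the paper leaves implicit, so your write-up is if anything more complete than the paper's own argument.
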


The proof of Theorem \ref{thm:traveling_wave} is a direct application of two theorems from \cite{Volpert1994}.  
The only observation needed to be made is that for fixed parameters $\omega, \theta, \eta, p, z_0$ and $k$ as 
hypothesized in Theorem \eqref{thm:traveling_wave}, there exists a critical value $a^*$ such that if $a>a^*$ the \eqref{sys:ibvp_tw}
has two stable steady-state solutions.  
Thus, an application of Theorem 2.1 give part {\it 1}.  On the other hand, 
for $0\le a<a^*$ the non-excited state is unstable and an application of Theorem 2.2 gives {\it 2}.

This result shows that certain initial data or triggering events can facilitate the propagation of rioting activity.  
Moreover, we observe a {\it critical threshold} develop for the critical social tension $a$.  That is, for moderate levels of $a$
the traveling wave can only move at a unique speed.  However, if the a is sufficiently small then 
the traveling waves which facilitate the dispersal of rioting activity can spread
at very large speeds.  For example, a very intense triggering event could lead to a faster spread of the riots.    
We leave the study of the various qualitative properties of the traveling wave solutions from Theorem \ref{thm:traveling_wave}, such as the 
asymptotic decay rates, for future work.

\label{sec:waves-local}

\subsection{Numerical experiments on spreading}
\label{sec:waves-simul}
The existence of traveling wave solutions can be useful in exploring the spreading speed of the riots and 
helps provide a global picture.  However, the actual spread of the rioting activity in the 2005 French data
seems to have some very interesting properties, which would lead us to believe that the true solutions are
not true traveling waves, although they do exhibit similar characteristics.  To better explain this, let us denote by
$p(x)$ the peak number of events at location $x$ and by $t(x)$ the day in which the rioting activity peaked. 
In preliminary analysis of the spread of rioting activity during the 2005 French riots, we observe the following general trend:
if the triggering event occurred in location $x'$ at time $t=0$ then the peak number of events decreased and was delayed in locations 
with the distance away from the rioting activity, in other words, $p(y)$ is a decreasing function of $\abs{y-x'}$ and $t(y)$
is an increasing function of $\abs{y-x'}$.  This was only a general global trend and there were exceptions of course.  In particular,
large cities erupted first in some cases.  We are able to reproduce this observation with the model \eqref{sys:cont1}.
\begin{enumerate}
\item {\it Traveling wave-like solution}:  Figures \ref{fig:tw1}-\ref{fig:tw4} illustrate
the numerical solutions where $\lambda(x,0) = e^{-10 x}$ and a triggering event occurring at location $x=0$. The result is
a wave-like solution whose crest decreases as it moves.  In Figure \ref{fig:tw1} one observes that the rioting activity
grows and at time $t=1$ the solution has a wave-like profile which connects the maximum level of rioting activity 
and the base level of rioting activity.  At time $t=2$ the solution continues to have a wave-like profile, however, 
it now connects a lower level of rioting activity to the zero level of rioting activity.  Of course, this is not a true traveling
wave as we see the crest decrease and the transition between the lowest level of activity and the highest level
widen.  We extract and illustrate from the evolution of this solution, the level of rioting activity at four different
locations ($x=0,1,2,3$) in Figure \ref{fig:pp1}.  Recall, that the triggering event occurred in location $x=0$
and here the peak of the level of activity is the highest of the four and occurs the earliest.  From there
we observe that the peaks at each corresponding location decreases with the distance from the triggering event location.

 \item {\it Spreading solution}:  Figures \ref{fig:tp1}-\ref{fig:tp4} illustrate
the numerical solutions where $\lambda(x,0) = \la_0=2$ and a triggering event occurring at location $x=5$. The result is
a bump-like solution whose center is located at the triggering event location.  Initially, this bump-like solution increases.  
In Figure \ref{fig:tp1} one observed that the rioting activity
grows and at time $t=1$ it begins to simultaneously  spread and decay.
We extract and illustrate from the evolution of this solution, the level of rioting activity at four different
locations ($x=2,3,4,5$) in Figure \ref{fig:pp2}.  Recall, that the triggering event occurred in location $x=0$
and here the peak of the level of activity is the highest of the four and occurs the earliest.  From there
we observe that the peaks at each corresponding location decreases with the distance from the triggering event location.
\end{enumerate}
\subsection{Mixed local-non-local model}
\label{sec:waves-disc}

The inclusion of social connections in the network leads to a (spatially) non-local spread of information in the social tension.  
To the authors' knowledge this poses a new mathematical system for which there is much theory that needs to be developed. 
For example, while the global existence of solutions to the Cauchy problem defined by system \eqref{sys:cont1} falls under classical theory, 
we are not aware of any proof of the global existence of solutions to \eqref{sys:nl} and hope to address this issue in the near future.  Moreover, 
in the special case where there is a symmetric influence between nodes and this influence deceases with geographic distance we obtain a
system of the following form: 
\begin{subequations}
\begin{align}\label{sys:nl_sym}
& \la_t = D\Delta \la - (\omega-\eta) \la +r(\alpha)G(\la)+\omega \la_b\\
& \alpha_t =\bar \eta \left(\int_\Omega\J(x-y)\alpha(y,t)\;dy -\alpha\right) - (h(\la)+\bar \eta)\alpha  +\theta \alpha_b+A\delta_{0,\bar s}.
\end{align}
\end{subequations}
The existence of traveling wave solutions in this system is an open problem.  Although we conjecture that 
such solutions exists.

\section{Conclusion}\label{sec:disc}
Our goal here is to introduce simple models for the dynamics of riots that include what we believe to be the 
essential ingredients to qualitatively capture the initiation, self-excitation, spreading, and relaxation of rioting activities.  Without
any of these factors, we believe that important qualitative information will be lost. 
In addition, we have introduced a way to model the social or global connections both on a network and on a continuum 
domain.  The former is useful for the purpose of fitting data and the latter to extract some characteristics of 
the growth, spreading, and decay of the rioting activities.  These models are mainly exploratory tools that
can be used to gain some insight into the effect that the different mechanisms have on the rioting activity.
  
One interesting outcome of this model is the {\it double threshold phenomena} that comes from the intensity
of the triggering event.  It was surprising to us that if the intensity of an event was sufficiently high the spreading of
riots changed from being a local one to a non-local one.  Of course, since this is an observation from numerical simulations it raises the question to verify this with rigorous analysis: it is an open problem to prove that such a threshold exists.  

While all rioting activities are similar in nature there are many contrasts between them as well - see for example \cite{Newburn2014}.
We aim in this paper to construct a model that has the flexibility to describe various types of bursts of rioting activity.  It is hoped that it could be used in the future to determine if, for example, a riot was exogenously driven or endogenously driven. This is another interesting direction of research that is worth pursuing. Likewise, the model allows for different possible regimes of influence of activity on the social tension. Here we have chosen to consider the effect when high activity slows down the relaxation of this tension. This is reflected in the choice $p>0$ for the function $h(\lambda)$. In other situations, a high activity may actually generate a rapid decay of the social tension as some observations seem to suggest. This leads to choose $p<0$ in this function. Also note that another possible choice for the evolution of the social tension field is
\begin{equation}
\frac{d}{dt}\alpha(s,t) =  \sum_{i=1}^nA_i\delta_{t=t_i,s=s_i} -   h(\lambda) ( \alpha(s,t) - \alpha_b(s)).
\end{equation}
We will discuss the case $p<0$ and this variant elsewhere.

\begin{figure}[H] 
\center
\subfloat[Level of rioting activity in time.]{\label{fig:pp1}\includegraphics[width=0.45\textwidth]{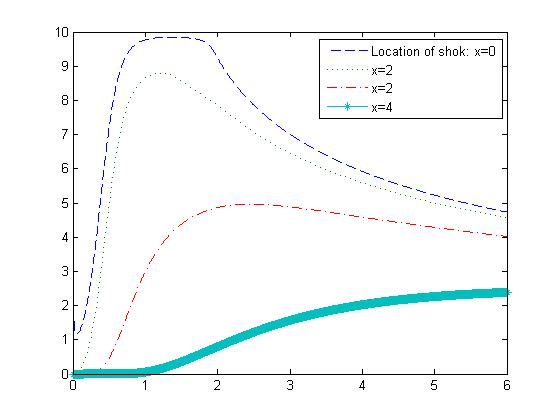}}
\subfloat[Level of rioting activity in time.]{\label{fig:pp2}\includegraphics[width=0.45\textwidth]{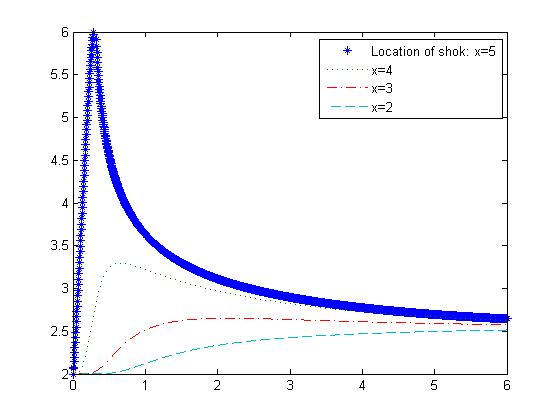}}
\caption{On the left we observe the total number of rioting activities for four different locations with parameters and initial conditions corresponding to
Figures \ref{fig:tw1}-\ref{fig:tw4}. On the left we observe the total number of rioting activities for four different locations with parameters and initial conditions corresponding to
Figures \ref{fig:tp1}-\ref{fig:tp4}.}\label{fig:7}
\end{figure}

Another interesting aspect of a riot that remains to be studied in detail is the process of ``self-relaxation."   
In fact, the 
reasons why riots end is recognized as quite unclear.  For example, in the 2005 French riots, riots were already strongly declining when the government adopted the strongest measures (notably an evening curfew). Rioters from the 2011 London riots reported to have stopped due to 
boredom, lack of targets, bad weather, fear of the police, and a call for peace by the father of a person killed during the riots \cite{Taylor2011}.  In the model, whenever there are no more shocks or other external inputs to the social tension, the rioting activity eventually ends (provided the background social tension is small enough). This is a key aspect of the model. The self-reinforcement effect leads to developing riots even though the social tension is already decreasing. However, when this social tension becomes small enough, it brings back the rioting activity to its baseline level. 

Some of the empirical factors mentioned in the introduction are clearly ingredients contributing to the decay of the social tension. It is well known that the weather plays a role in riots (everything else being equal, the likeliness of riots increases with the temperature). This factor can be thought of as modulating the baseline value of the social tension. Other empirical elements could be incorporated into our model. 
Specifically, the effects of the police department or announcements for the police that the army will become involved (such as what happened in the 2011 London riots) can be incorporated as a negative shock. This is obviously the same for the call for peace - but why a specific call, or such an announcement by the police, are actually perceived as events of strong amplitude is a deep issue in sociology, outside the scope of this modeling approach.  In this model, a negative shock on the social tension can bring the system into the regime of decreasing rioting activity. We would like to emphasize that a shock that would act directly on and only on the rioting activity would be inefficient - the system remaining in the regime where the activity increases. This bears some interesting consequences on what type of measures are more likely to ease a situation with severe unrests.
A more detailed study of the effect of negative shocks is also a natural direction of future work.

As already said, the purpose of this work is to establish models that characterize the global behavior of such systems.  However, the ultimate objective is to use data from 
riots to validate this model. One perspective is the use of the 2011 London riots data which are available on line. The case of the 2005 French riots would also  be very interesting to study: the fact that  
 the riots spread throughout the country makes this a particularly interesting data set to study the effects of non-local 
dispersal of the riots though the study of the proper social connections included in the model.  
In terms of stylized facts, our preliminary analysis seems to support that the model we propose here is coherent with the observations.

\begin{figure}[H] 
  \center
  \subfloat[{\footnotesize Solutions at $t=1.$}]{\label{fig:tw1}\includegraphics[width=0.45\textwidth]{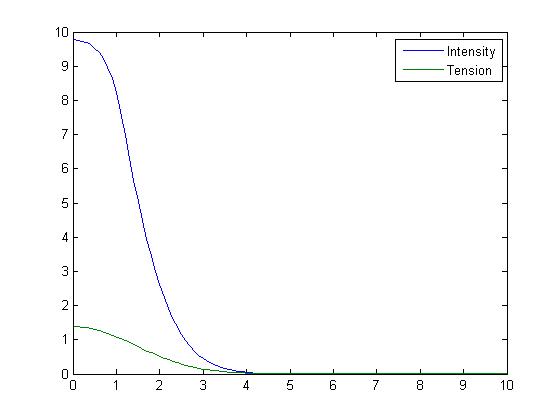}}
  \subfloat[Solutions at $t=2.$]{\label{fig:tw2}\includegraphics[width=0.45\textwidth]{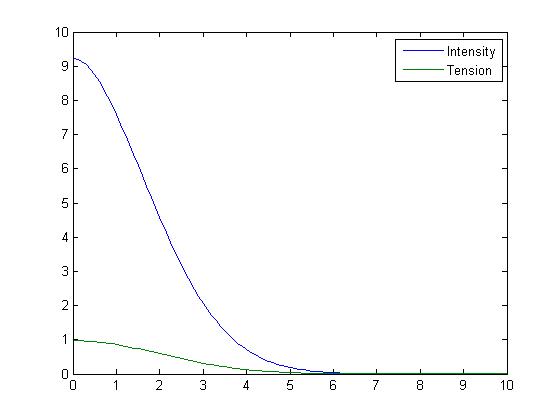}}\\
  \subfloat[Solution at $t=3.$]{\label{fig:tw3}\includegraphics[width=0.45\textwidth]{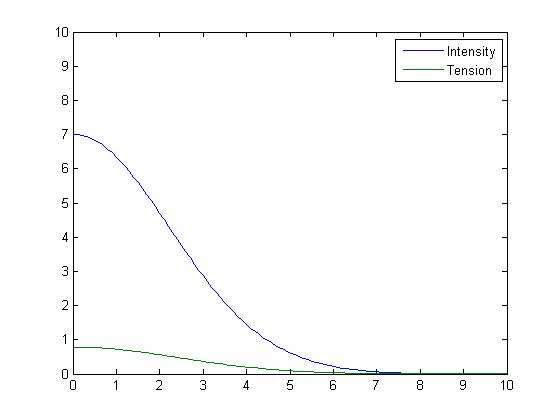}}
  \subfloat[Solution at $t=4.$]{\label{fig:tw4}\includegraphics[width=0.45\textwidth]{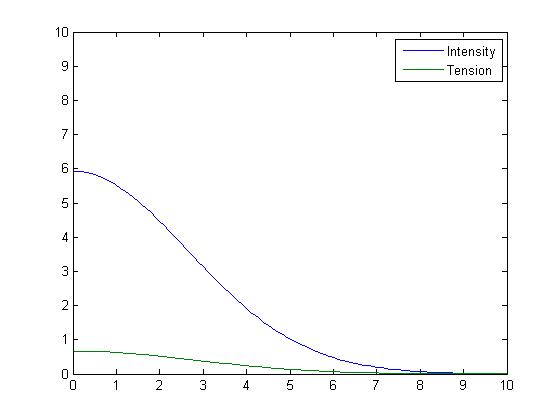}}\\
 \caption{Figures \ref{fig:tw1}-\ref{fig:tw4} illustrate the dynamics of the
numerical solution to system \eqref{sys:cont1} with exponential initial intensity at
in the triggering event location $x=0$ and with a triggering event intensity $A=50$.  
The horizontal axis represents space. }\label{fig:5}
\end{figure}

Finally, it is worth noting that modeling this social phenomena leads to the construction of 
new mathematical models, for which there is much mathematical theory to be developed. We hope that this article will encourage future work in this direction. In particular we emphasize the open problems related to the properties of the model with stochastic shocks and to the study of mixed local /non-local diffusion models. For the latter, solving the Cauchy problem, the construction of traveling fronts and the determination of the asymptotic speed of propagation are new types of problems which could lead to interesting mathematical developments.  Lastly, the different geometries of a network and heterogeneities in a system will lead to varying types of 
diffusion.  It would be an interesting problem to see how non-linear or fractional diffusion influence the the existence of traveling wave solutions and their propagation speed (if they exists).

\begin{figure}[H] 
  \center
  \subfloat[{\footnotesize Solutions at $t=.5.$}]{\label{fig:tp1}\includegraphics[width=0.45\textwidth]{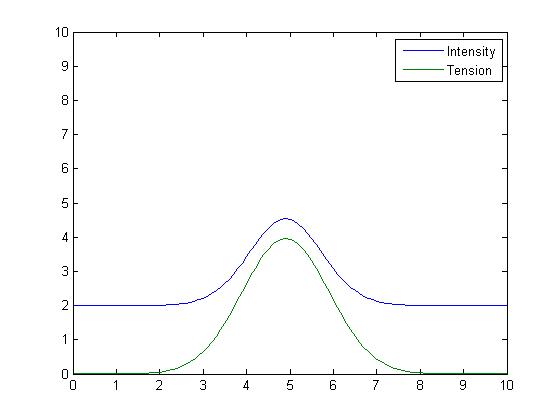}}
  \subfloat[Solutions at $t=1.5.$]{\label{fig:tp2}\includegraphics[width=0.45\textwidth]{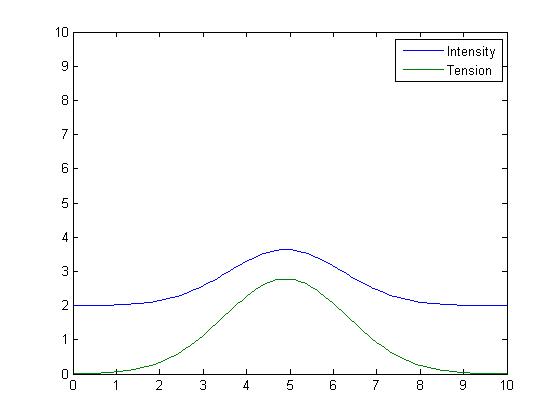}}\\
  \subfloat[Solution at $t=2.$]{\label{fig:tp3}\includegraphics[width=0.45\textwidth]{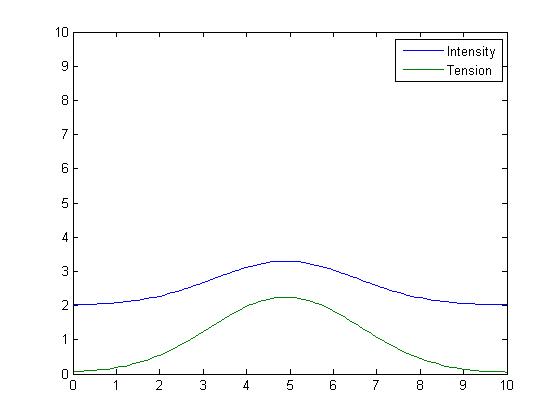}}
  \subfloat[Solution at $t=2.5.$]{\label{fig:tp4}\includegraphics[width=0.45\textwidth]{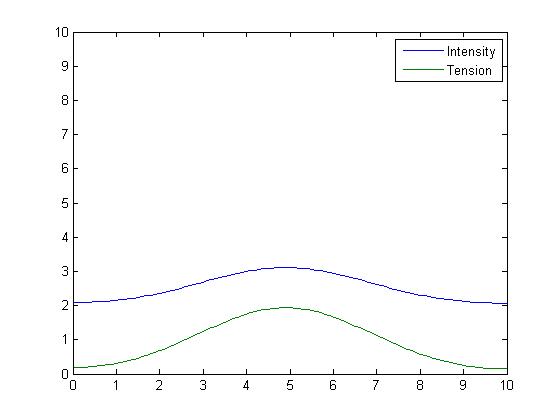}}
 \caption{
Figures \ref{fig:tp1}-\ref{fig:tp4} illustrate the dynamics of the
numerical solution to system \eqref{sys:cont1} with a constant initial conditions $\la(x,0)=2$
and a triggering event with intensity $A=100$ occurring at location $x=0$.  
The following parameters were used for both numerical experiments: $z_0 = 10,a = 100,\omega = .2,\theta=0.05,
\eta = .198.$ The horizontal axis represents space. }\label{fig:6b}
\end{figure}

\section*{Acknowledgments}
The research leading to these results has received funding from the European Research Council
under the European Union's Seventh Framework Programme (FP/2007-2013) / ERC Grant
Agreement n. 321186 - ReaDi - ``Reaction-Diffusion Equations, Propagation and Modelling'' held by Henri Berestycki. 
This work was also partially supported by the French National Research Agency (ANR), within  project NONLOCAL ANR-14-CE25-0013, and by the French National Center for Scientific Research (CNRS), within the project PAIX of the CNRS program PEPS HuMaIn.

The authors are grateful to Sebastian Roch\'e and Mirta Gordon for the discussions that motivated this work. They also thank Laurent Bonnasse-Gahot for discussions in the later stages of this work. %%% MODIF
\newpage
\bibliographystyle{alpha}
\bibliography{library}
\end{document}